\documentclass[11pt, reqno]{amsart}
\usepackage{amsmath}
\usepackage{amsthm}
\usepackage{amssymb}
\usepackage{amscd} 
\usepackage{amsbsy}
\usepackage{epsfig}
\usepackage{graphicx}
\usepackage{psfrag}
\usepackage{hyperref}

\textheight=210mm \topmargin=5mm \oddsidemargin=0mm
\evensidemargin=0mm \textwidth=160mm
\parindent=0cm
\parskip=3mm

\theoremstyle{plain}% default
\newtheorem{theorem}{Theorem}[section]
\newtheorem{remark}[theorem]{Remark}
\newtheorem{proposition}[theorem]{Proposition}
\newtheorem{lemma}[theorem]{Lemma}
\newtheorem{corollary}[theorem]{Corollary}

\theoremstyle{definition}
%[section]

\newfont\bbf{msbm10 at 12pt}
\def\eps{\varepsilon}
\def\R{{\mathbb R}}

\def\N{{\mathbb N}}
\def\Z{{\mathcal Z}}

\def\B{{\mathcal B}}
\def\P{{\mathcal P}}

\def\L{{\mathcal L}}

\def\cI{{\mathcal I}}
\def\es{{\emptyset}}
\def\sm{\setminus}

\def\Icont{I_{\small cont}}
\def\Ycont{Y_{\small cont}}

\def\diam{\mbox{\rm diam} }
\def\orb{\mbox{\rm orb}}

\def\le{\leqslant}
\def\ge{\geqslant}

\newcommand{\I}{\mathring{I}}
\newcommand{\f}{\mathring{f}}

\newcommand{\Lp}{\mathcal{L}}

\newcommand{\vf}{\varphi}
\newcommand{\ve}{\varepsilon}

\newcommand{\hLp}{\mathring{\Lp}}

\newcommand{\beq}{\begin{equation}}
\newcommand{\eeq}{\end{equation}}

\numberwithin{equation}{section}

\begin{document}
\author[H. Bruin]{Henk Bruin}
\address{Henk Bruin\\ Faculty of Mathematics \\
Vienna University\\
Oskar Morgensternplatz 1, Vienna 1090 \\
Austria}
\email{\href{mailto:henk.bruin@univie.ac.at}{henk.bruin@univie.ac.at}}
\urladdr{\url{http://www.mat.univie.ac.at/~bruin/}}

\author[M.F. Demers]{Mark F. Demers}
\address{Mark F. Demers\\ Department of Mathematics \\
Fairfield University\\
Fairfield, CT 06824 \\
USA}\email{\href{mailto:mdemers@fairfield.edu}{mdemers@fairfield.edu}}
\urladdr{\url{http://faculty.fairfield.edu/mdemers}}

\author[M. Todd]{Mike Todd}
\address{Mike Todd\\ Mathematical Institute\\
University of St Andrews\\
North Haugh\\
St Andrews\\
KY16 9SS\\
Scotland} \email{\href{mailto:m.todd@st-andrews.ac.uk}{m.todd@st-andrews.ac.uk}}
\urladdr{\url{http://www.mcs.st-and.ac.uk/~miket/}}

\title{Hitting and escaping statistics: mixing, targets and holes}
\date{\today}

\thanks{
MD was partially supported by NSF grant DMS 1362420.  This project was started 
as part of an RIG grant through ICMS, Scotland.  The authors would like to thank ICMS
for its generous hospitality.  They would also like to thank the ICERM Semester Program on Dimension 
and Dynamics and the ESI Thematic Programme Mixing Flows and Averaging Methods where part of this work was carried out.}

\subjclass[2000]{37A25, 37C30, 37E05, 37D35, 37D25}
\keywords{Hitting time statistics; open systems; escape rates; recurrence; mixing rates}

\begin{abstract}
There is a natural connection between two types of recurrence law: hitting times to shrinking targets, and hitting times to a fixed target (usually seen as escape through a hole).  We show that for systems which mix exponentially fast, one can move through a natural parameter space from one to the other.  On the other hand, if the mixing is subexponential, there is a phase transition between the hitting times law and the escape law.
\end{abstract}

\maketitle

\section{Introduction}

This work is motivated by the natural connection between escape rates and hitting times. 
The existence of an exponential Hitting Time Statistics (HTS) law, which is a recurrence law to shrinking targets, is a rather soft condition: in all cases we are aware of, all one requires is mixing, with no rates necessary.  However, under some mixing conditions, 
good error bounds can be derived (see e.g.\ \cite{FreFreTod15}) which mean that we can change the scaling in that law and still derive a non-degenerate limit law. If the mixing is exponential, the scaling can be changed to recover an escape rate to a fixed hole/target.
In this paper we explore a parameter space which takes us between the escape rate case and the hitting time case. Under exponential mixing we can go between these laws in a non-degenerate way. A phase transition occurs when one leaves the hitting time setting and heads towards the escape case whenever the system is subexponentially mixing.
In this paper we address such transitions in the case of stretched exponential, super-polynomial
and polynomial rates of mixing.

\subsection{Hitting times, escape rates, and between}
\label{sec:hitting}

Given a dynamical system
$f:X \circlearrowleft$ preserving an ergodic probability measure $\mu$, one can consider first entry times to a sequence of subsets $(U_r)_r$ with
$U_r$ shrinking to a given point $z$ as $r \to 0$.
Letting $\tau_r$ be the \emph{first hitting time} to $U_r$, i.e., 
$$\tau_r(x):=\inf\left\{n\ge 1: f^n(x) \in U_r\right\},$$ one can ask how the quantity $\mu(\tau_r > t)$ depends asymptotically on both $r$ and $t$ (for a fixed $z$).  
To derive a HTS law, one scales the time via $t = s/\mu(U_r)$ for some $s \in \mathbb{R}^+$ and considers the limit
\begin{equation*}
\lim_{r \to 0} \mu(\tau_r > s/\mu(U_r))   
\label{eq:HTS no log}
\end{equation*}
For a large range of dynamical systems it is known that this limit is $e^{-s}$ for $\mu$-a.e. centre $z$.  So we obtain an expression which is more convenient in this work:
\begin{equation}
\lim_{r \to 0} - \frac{1}{s}\log \mu(\tau_r > s/\mu(U_r)) = 1 \qquad \mbox{for $\mu$-a.e. $z$} .
\label{eq:HTS}
\end{equation}
There is a wealth of literature on this topic, but here we just refer to the reviews \cite{Hay13} and \cite[Chapter 5]{Exbook} and note that we only require very basic mixing properties for \eqref{eq:HTS}; for example, for multimodal maps of the interval, if there is an absolutely continuous invariant measure (with no mixing requirement), this law holds \cite{BruTod09}.

From the point of view of open systems, one declares $U_r$ to be a (fixed) hole and considers 
any point entering $U_r$ to be annihilated from the system.  In contrast to hitting times
(where $\tau_r(x) \geq 1$ a.s.~also for $x \in U_r$), in an open system
a point $x \in U_r$ is not allowed to exit $U_r$.  Thus the escape time $e_r(x)$ satisfies
$e_r(x) = \tau_r(x)$ if $x \notin U_r$ and $e_r(x) = 0$ if $x \in U_r$.  However, an essential
connection between the two is given by,
\[
\big\{ x \in X : \tau_r(x) = t \big\} = f^{-1}\left( \big\{ x \in X : e_r(x) = t-1 \big\} \right), \qquad \mbox{for all $t \ge 1$.}
\]
Due to the invariance of $\mu$, the escape rate can be defined by the following
equivalent expressions,
\begin{equation}
\lim_{t \to \infty} - \frac 1t \log \mu (e_r > t-1) = \lim_{t \to \infty} - \frac{1}{t} \log \mu(\tau_r > t)  \label{eq:esc rate}
\end{equation}
when this limit exists.  If the limit exists, we label it $-\log \lambda_r$ for reasons that will become clear later and consider the 
`derivative of the escape rate', expressed as the limit,
\begin{equation}
\lim_{r \to 0} \frac{- \log \lambda_r}{\mu(U_r)} = 1 \qquad \mbox{for $\mu$-a.e. $z$}, 
\label{eq:esc r deriv}
\end{equation}
which has been proved for certain exponentially mixing systems \cite{BY,KL zero}. 
We are not aware of examples where  the limit in \eqref{eq:esc rate} exists
(in the exponentially mixing setting), but \eqref{eq:esc r deriv} fails. 
Naturally if the system is subexponentially mixing, then \eqref{eq:esc rate} 
should be degenerate and so \eqref{eq:esc r deriv} fails, see \cite{DemFer}. 
One expects (see e.g.\ \cite{FerPol12, FreFreTod13}) that for periodic points $z$, 
the limit will be some number in $(0,1)$ which can be expressed in terms of the relevant potential; 
if $f$ is continuous, for all other points the limit should be 1.
The recent work \cite{PolUrb} extends this point of view to a wide variety of conformal
systems via symbolic dynamics.

Both of the limits \eqref{eq:HTS} and \eqref{eq:esc r deriv} can be seen as special limiting cases of the expression
\begin{equation}
\frac{1}{\mu(U_r)} \frac{-1}{t} \log \mu(\tau_r > t), \label{eq: general lim}
\end{equation}
where the open system perspective takes first the limit $t \to \infty$ then $r \to 0$, while the hitting time perspective takes the 
`diagonal limit' $r \to 0$ with $t = s/\mu(U_r)$.

Once one views this expression in the two-dimensional parameter space $(r,t)$,
one can naturally ask questions regarding convergence along various paths through
this parameter space.
Setting $t = s \mu(U_r)^{-\alpha}$ for some $\alpha, s \in (0, \infty)$, we formulate the generalised limit 
\begin{equation}\label{eq:L}
L_{\alpha, s}(z) :=\lim_{r \to 0}  \frac{-1}{s \mu(U_r)^{1-\alpha}} \log \mu(\tau_r > s\mu(U_r)^{-\alpha}),
\end{equation}
if the limit exists.
With this formulation, the case $\alpha = 1$ coincides with the diagonal limit formulated above for hitting time statistics.
Additionally, $\alpha = \infty$ can be thought of as coinciding with the derivative of the escape rate
\eqref{eq:esc r deriv}
(where $t \to \infty$ as $r$ is held fixed), 
while $\alpha = 0$ can be thought of as the
reversed order of limits,
  \[
\lim_{t \to \infty} \lim_{r \to 0} \frac{-1}{t \mu(U_r)} \log \mu(\tau_r > t).
\]

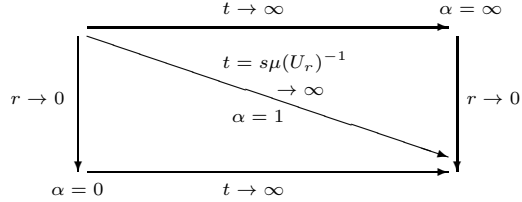
\begin{figure}[h]
\unitlength=6mm
\begin{picture}(10,4)(0,-0.2) \let\ts\textstyle
\put(0,0){\vector(1,0){8}}  \put(3,-0.5){\tiny $t \to \infty$}
\put(0,3.2){\vector(1,0){8}}  \put(3,3.5){\tiny$t \to \infty$}
\put(0,3){\vector(3,-1){8}}  \put(3,2.3){\tiny$t = s\mu(U_r)^{-1}$}
 \put(4.2,1.7){\tiny$\to \infty$}
\put(-0.2,3){\vector(0,-1){3}}  \put(-1.7,1.5){\tiny$r \to 0$}
\put(8.2,3){\vector(0,-1){3}}  \put(8.4,1.5){\tiny$r \to 0$}
\put(7.8, 3.5){\tiny$\alpha = \infty$}\put(-0.8, -0.5){\tiny$\alpha = 0$}
\put(3.2, 1.1){\tiny$\alpha = 1$}
\end{picture}
\caption{Different paths of taking the limit
$r\to 0$, $t \to \infty$, with $t = s \mu(U_r)^{-\alpha}$.}
\label{fig:square}
\end{figure}

\begin{remark}\label{rem:alpha}
For every $\alpha \in [0,1)$, and supposing $\mu(\tau_r > t) > 0$
for all $t$,
$L_{\alpha, s}(z) \in [0,1]$,
provided it exists.  Indeed, for $t = s \mu(U_r)^{-\alpha}$, we have
\begin{eqnarray*}
0 &\le& \frac{-\log \mu(\tau_r > t)}{s \mu(U_r)^{1-\alpha}} = 
\frac{-\log(1-\mu(\tau_r \le t))}{s \mu(U_r)^{1-\alpha}} \\
&=&
\frac{-\log(1-\mu(\cup_{j=0}^{t-1} f^{-j}(U_r)))}{s \mu(U_r)^{1-\alpha}}
\le \frac{-\log(1-t \mu(U_r))}{s \mu(U_r)^{1-\alpha}}
 \\
&=& \frac{-\log( 1-s \mu(U_r)^{1-\alpha} )}{s \mu(U_r)^{1-\alpha}} \to 1
\quad \text{as } \mu(U_r) \to 0.
\end{eqnarray*}
Therefore, any limit point belongs to $[0,1]$.
The calculation above also implies that, when the limit exists,
for $\alpha < 1$,
\begin{equation}\label{eq:mutau}
\lim_{r \to 0} \frac{- \log \mu(\tau_r > s \mu(U_r)^{-\alpha})}{s\mu(U_r)^{1-\alpha}} = \lim_{r \to 0} \frac{\mu(\tau_r \le s\mu(U_r)^{-\alpha})}{s\mu(U_r)^{1-\alpha}}.
\end{equation}
\end{remark}

\subsection{Brief summary of results}

Our main results are, roughly speaking, that if the system behaves well and is exponentially mixing, then $L_{\alpha, s}(z)$ exists 
for all $\alpha$;  it can be written in terms of the periodic behaviour if $z$ is periodic, and  $L_{\alpha, s}(z)=1$ otherwise 
(Theorem~\ref{thm:exp}).  
On the other hand, if the system is slower than stretched exponentially mixing then $L_{\alpha, s}(z)=0$ for $\alpha>1$.  
If the system is (exactly) stretched exponentially mixing then there exists an $\alpha_0>1$ depending on the mixing rate 
so that we have the same result as for the exponential case if $\alpha<\alpha_0$, and $L_{\alpha, s}(z)=0$ for $\alpha>\alpha_0$ 
(Theorem~\ref{thm:induce}).  The latter results employ an inducing argument and a large deviations law 
(either exponential, stretched exponential or polynomial).  Our examples using inducing schemes require good large deviations 
of the inducing time, with various types of tail.  

We remark that the existence of $L_{\alpha,s}(z)$ for $\alpha \neq 1$ is more delicate than 
for $\alpha =1$ and gives additional information about the distribution of $\tau_r$.  For example,
in the generic case, when $\alpha =1$, one obtains that
$\mu(\tau_r > s\mu(U_r)^{-1}) \to e^{-s}$ as $r \to 0$, but the rate of convergence does not appear.
By contrast, when $\alpha \neq 1$, the limit of $\mu(\tau_r > s\mu(U_r)^{-\alpha})$ is always
either $1$ (for $\alpha <1$) or $0$ (for $\alpha > 1$), and $L_{\alpha,s}(z)$ captures the
exponential rate at which this convergence occurs.  This rate provides information about the
tail distribution of $\tau_r$ for small $r$.  Again using the generic case as an example,
when $\alpha > 1$, $L_{\alpha,s}(z)=1$ implies
$\mu(\tau_r > s\mu(U_r)^{-\alpha}) = e^{-(1 \pm \ve) s\mu(U_r)^{1-\alpha}}$;
when $\alpha < 1$, $L_{\alpha,s}(z)=1$ implies
$\mu(\tau_r \le s\mu(U_r)^{-\alpha}) = (1 \pm \ve) s \mu(U_r)^{1-\alpha}$,
due to \eqref{eq:mutau}. 

The paper is organised as follows.
In Section~\ref{sec:exp mix}, we consider interval maps with good spectral properties; namely, 
that an associated family of transfer operators has a spectral gap.   Our results are formulated abstractly, 
but in Section~\ref{sec:examples} we give specific examples including Lasota-Yorke maps, Gibbs-Markov maps and the Gauss map.  
In Section~\ref{sec:inducing}, we consider systems where a well-chosen first return map has good properties and show how 
the tail of the first return time affects the limits $L_{\alpha, s}$.  
Again we formulate our results abstractly and then provide examples 
in Section~\ref{sec:app} to a variety of maps, including generalised Farey maps,
several classes of unimodal maps and Young towers. 
In the appendix we (re)prove two technical results used in Sections~\ref{sec:exp mix} and \ref{sec:inducing}.

\medskip
\noindent
{\bf Notation.}  We will use the following notational conventions throughout the paper without further mention.  $A = C^{\pm 1} B$ means there exists $C \ge 1$ such that $C^{-1} B \le A \le C B$;
similarly, $A = (1 \pm \ve) B$ means $(1- \ve)B \le A \le (1+ \ve)B$.  We write $A \sim B$ if
$\lim A/B = 1$, where the parameter in which the limit is taken is clear by context (usually it is
$r \to 0$).

%%%%%%%%%%%%%%%%%%%%%%%%%%%%%%%%%%%%%

\section{Exponentially mixing case}

\label{sec:exp mix}

In this section, we consider a piecewise continuous map of the unit interval $f: I \circlearrowleft$, 
with countably many intervals of monotonicity.  Our assumptions will be general enough to allow both traditional piecewise expanding maps as well as more general
Gibbs-Markov maps with contracting potentials.  We will then prove our results
regarding $L_{\alpha,s}(z)$, defined in \eqref{eq:L},
with respect to equilibrium states for these potentials. We will make assumptions on the map ((F1)--(F4) below as well as \textbf{(U)}, and where appropriate \textbf{(P)}) which imply the conditions of Rychlik \cite{Rychlik} as well as giving a form of the Lasota-Yorke inequality
needed in Proposition~\ref{prop:LY}. 

Assume that there exists a countable collection of maximal intervals $\Z = \{ Z_i \}_i$, $Z_i \subset I$,
with disjoint interiors, such that $f$ is continuous and strictly monotonic on each $Z_i$.  
We set $D = I \setminus \cup_{i} \mbox{int}(Z_i)$.

We assume that there exists a (nonatomic) Borel probability measure $m_\vf$ such that $m_\vf(D) =0$,
which is conformal with
respect to a potential
$\vf : I \to \mathbb{R}$, i.e., $dm_\vf/d(m_\vf \circ f) = e^\vf$. 
The associated transfer operator acting on $L^1(m_\vf)$ is
\[
\Lp_\vf \psi(x) = \sum_{y \in f^{-1}x} \psi(y) e^{\vf(y)} , \qquad \forall \psi \in L^1(m_\vf).
\]
We will study the action of $\Lp_\vf$ on functions of bounded variation.
To that end, recall that the variation of a function $\psi$ on an interval
$J$ is defined by
\[
\bigvee_J \psi = \sup \left\{ \sum_{i=0}^{k-1} |\psi(x_{i+1}) - \psi(x_i)| \; : \; x_0 < x_1 < \cdots < x_k,
\ x_i \in J, \forall i \le k \right\} , 
\]
where the supremum is taken over all finite sets $\{ x_i \}_i \subset J$.

Let $S_n \vf = \sum_{i=0}^{n-1} \vf \circ f^i$. 
We set $\vf|_D = - \infty$ and assume the potential $\vf$ satisfies the following regularity properties:
\begin{enumerate}
  \item[(F1)] $\exists C_d >0 $ s.t. $ |e^{S_n\vf(x) - S_n\vf(y)} -1| \le C_d|f^nx - f^ny|$ whenever
  $f^ix, f^iy$ lie in the same element of $\Z$ for $i = 0, 1, \ldots, n-1$;
  \item[(F2)] $\sum_{Z \in \Z} \sup_Z e^{\vf} < \infty$;
  \item[(F3)] $\exists n_0 \in \N$ such that $\sup_I e^{S_{n_0} \vf} < \inf_{I\setminus D} \Lp_\vf^{n_0} 1$;
  \item[(F4)]  for each interval $J \subset I\setminus D$, $\exists N = N(J)$ s.t. 
  $\inf_{I \setminus D} \Lp_\vf^N 1_J > 0$, where $1_J$ is the indicator function of the set $J$.
\end{enumerate}  
 
Due to the existence of the conformal measure $m_\vf$, we have
$\int \Lp_\vf^n 1 \, dm_\vf = \int 1\, dm_\vf = 1$, so that
$\inf_{I\setminus D} \Lp_\vf^n 1 \le 1$ for each $n \in \N$.  Thus by (F3), $\sup_I e^{S_{n_0}\vf} < 1$.
Then since $\sup_I e^{S_n \vf}$ is submultiplicative,
\begin{equation}
\label{eq:n1 def}
\exists n_1 \in \N \mbox{ such that } (2+2C_d) \sup_I e^{S_{n_1}\vf} < 1,
\end{equation}
where $C_d$ is from (F1).

Now fix $z \in I$ and for $r_0 > 0$, define $( U_r )_{r \in (0, r_0)}$ to be a 
family of intervals such that $\diam(U_r) \to 0$ as $r \to 0$, and $\cap_{r} U_r = \{ z \}$.  
From the point of view of open systems, for each
$r$, we define the map with hole $U_r$ and its iterates by, $\f_r^n = f^n |_{\I_r^{n-1}}$, where 
$\I_r^0 = I \setminus U_r$ and $\I_r^n = \cap_{i =0}^{n} f^{-i}(I \setminus U_r)$.

Let $\cI_r^n$ denote the intervals of monotonicity for $\f_r^n$.
We assume the following uniform large images condition for $\f_r^{n_1}$ on the sequence $(U_r)_r$.
\begin{itemize}
  \item[{\bf (U)}]  There exists $c_0 > 0$ such that 
  $$\inf_{r\in [0,r_0]} \inf \{ m_\vf(\f^{n_1}_rJ) : J \in \cI_r^{n_1} \} \ge c_0,$$ 
\end{itemize}
where $n_1$ is from \eqref{eq:n1 def}.

As we shall show in Section~\ref{prelim}, under assumptions (F1)-(F4), $\Lp_\vf$
admits a unique invariant measure $\mu_\vf$, absolutely continuous with respect to $m_\vf$, whose density $g_0$ is of bounded variation and is bounded away 
from $0$.   Note that $\mu_\vf$ can also be characterised as an equilibrium state for $\vf$.  That is, for the variational pressure $P(\vf):=\sup\left\{h(\mu)+\int\vf~d\mu\right\}$ where the supremum is taken over all $f$-invariant probability measures, $\mu_\vf$ is an equilibrium state since it satisfies $h(\mu_\vf)+\int\vf~d\mu_\vf=P(\vf)$.  Moreover, since we can also think of $P(\vf)$ as the $\log$ of the leading eigenvalue of $\L_\vf$, our assumptions here give $P(\vf)=0$.

In the case when $z$ is periodic of prime period $p$,
we shall need the following additional condition.

\begin{itemize}
 \item[{\bf (P)}] The density $g_0$ is continuous at $z$ and $f^p$ is monotonic at $z$.
\end{itemize}
Let 
\begin{equation}\label{eq:Icont}
\Icont:=\{z\in I: f^k \text{ is continuous at } z \text{ for all } k\in \N\}.
\end{equation}
That $\mu_\vf(\Icont) =1$ follows from the assumption that $m_\vf(D) =0$.
The main result of this section is the following theorem.

\begin{theorem}
\label{thm:exp}
Let $(f, \vf)$ satisfy (F1)-(F4).
Fix $z \in \Icont$ and let $(U_r)_{r \in (0, r_0]}$ be a 
family of intervals such that $\lim_{r \to 0} \diam(U_r) = 0$ 
and $\cap_r U_r = \{ z \}$,
satisfying {\bf (U)}, and in the periodic case, {\bf (P)} as well.

Then, for any $s \in \mathbb{R}^+$ and $\alpha \in [0,\infty]$, taking $L_{\alpha,s}(z)$
with respect to the invariant measure $\mu_\vf$, we have
$$
L_{\alpha,s}(z) = \begin{cases}
       1, & \mbox{if $z$ is not periodic} \\
       1 - e^{S_p\vf(z)}, & \mbox{if $z$ is $p$-periodic},
\end{cases}
$$
where $p$-periodic means that the prime period of $z$ is $p$. 
\end{theorem}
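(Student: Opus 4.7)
The plan is to reduce the computation of $L_{\alpha,s}(z)$ to a spectral analysis of the transfer operator with a hole. Define the punctured operator $\hLp_{\vf,r}\psi := \Lp_\vf(\psi \cdot 1_{I\setminus U_r})$, so that by conformality of $m_\vf$ and $f$-invariance of $\mu_\vf = g_0\,dm_\vf$,
\[
\mu_\vf(\tau_r > t) \;=\; \mu_\vf(\I_r^{t-1}) \;=\; \int \hLp_{\vf,r}^{\,t}\, g_0 \, dm_\vf.
\]
The first step is to establish a uniform Lasota--Yorke inequality for $\hLp_{\vf,r}$ on functions of bounded variation, using (F1)--(F4), the contraction \eqref{eq:n1 def}, and the uniform large-images condition \textbf{(U)}. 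Together with condition (F4) for quasi-compactness of the unperturbed $\Lp_\vf$ (which yields existence and uniqueness of $g_0$ with $g_0$ bounded away from $0$), this gives a uniform spectral gap for $\hLp_{\vf,r}$ for all $r$ small enough.

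With the uniform spectral gap in hand, I would apply a Keller--Liverani type perturbation argument (with the perturbation bound controlled by $\mu_\vf(U_r)$) to obtain a simple leading eigenvalue $\lambda_r < 1$ with eigenfunction $g_r$ of bounded variation, normalized so that $g_r \to g_0$ as $r \to 0$. The spectral decomposition yields
\[
\hLp_{\vf,r}^{\,t} g_0 \;=\; \lambda_r^t \bigl(\nu_r(g_0)\, g_r \,+\, Q_r^t g_0 \bigr),
\]
where $\nu_r$ is the corresponding eigenmeasure with $\nu_r(g_0) \to 1$, and $Q_r$ has spectral radius uniformly bounded by some $\rho < 1$. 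Integrating against $m_\vf$ gives
\[
\mu_\vf(\tau_r > t) \;=\; \lambda_r^t\bigl( C(r) + O(\rho^t) \bigr), \qquad C(r) \xrightarrow{r \to 0} 1.
\]

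The crucial quantitative input is the identification of the first-order behavior of the eigenvalue,
\[
\lim_{r \to 0}\frac{1-\lambda_r}{\mu_\vf(U_r)} \;=\; \theta(z),
\]
where $\theta(z)=1$ in the non-periodic case and $\theta(z) = 1 - e^{S_p\vf(z)}$ when $z$ has prime period $p$. The idea is that the leading-order loss of mass in one step is $\mu_\vf(U_r)$; for a non-periodic $z \in \Icont$ successive images of $U_r$ are disjoint from $U_r$ at the relevant scale, so no correction is needed, whereas for $p$-periodic $z$ there is a recapture at time $p$ whose mass is approximately $e^{S_p\vf(z)}\mu_\vf(U_r)$ by the conformality relation $dm_\vf/d(m_\vf\circ f^p)=e^{S_p\vf}$, continuity of $g_0$ at $z$, and the monotonicity of $f^p$ at $z$ (both granted by \textbf{(P)}). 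This calculation, carried out for $\hLp_{\vf,r}$ applied to $g_0$, is the main technical obstacle, and I would either invoke it from Keller--Liverani / Bahsoun--Demers-style arguments or reprove it in the appendix as the theorem's preamble hints.

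Once $\theta(z)$ is identified, the conclusion is direct: for $t = s\mu_\vf(U_r)^{-\alpha}$,
\[
\frac{-\log \mu_\vf(\tau_r>t)}{s\mu_\vf(U_r)^{1-\alpha}} \;=\; \frac{-t\log\lambda_r}{s\mu_\vf(U_r)^{1-\alpha}} \;-\; \frac{\log C(r) + O(\rho^t)}{s\mu_\vf(U_r)^{1-\alpha}}.
\]
Since $-\log\lambda_r = (1-\lambda_r)(1+o(1)) = \theta(z)\mu_\vf(U_r)(1+o(1))$, the first term converges to $\theta(z)$ for every $\alpha \in [0,\infty]$; the second term vanishes because $C(r)\to 1$ and (for $\alpha>1$, where $\mu_\vf(U_r)^{1-\alpha}\to\infty$) the $O(\rho^t)$ contribution is doubly negligible, while for $\alpha = \infty$ (the fixed-hole limit $t\to\infty$) the numerator is bounded and the denominator is a constant times $t$. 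This yields $L_{\alpha,s}(z) = \theta(z)$, matching the claimed formula; the main difficulty throughout is securing the Keller--Liverani framework uniformly in $r$ and executing the periodic-point correction in the identification of $\theta(z)$.
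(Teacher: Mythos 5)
Your outline for $\alpha\in(0,\infty]$ is essentially the paper's proof: uniform Lasota--Yorke inequalities plus \textbf{(U)} give a uniform spectral gap for the punctured operators via Keller--Liverani perturbation theory; the eigenvalue asymptotic $\lim_{r\to0}(1-\lambda_r)/\mu_\vf(U_r)=\theta(z)$ is exactly what the paper extracts from the Keller--Liverani rare-events formula (the periodic correction being the computation $q_{p-1}=e^{S_p\vf(z)}$ for the recapture at time $p$, using \textbf{(P)} and (F1)); and the final reduction to $t\log\lambda_r$ plus a spectral error is the paper's \eqref{eq:alpha split}. One imprecision: for $\alpha\in(0,1)$ the denominator $s\mu_\vf(U_r)^{1-\alpha}$ tends to $0$, so ``$C(r)\to1$'' is not sufficient to kill the error term; you need the quantitative projector estimate $|1-C(r)|=O(\mu_\vf(U_r)|\log\mu_\vf(U_r)|)$ from \cite{KL pert}, which yields $\mu_\vf(U_r)^{\alpha}|\log\mu_\vf(U_r)|\to0$ precisely because $\alpha>0$.

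The genuine gap is at $\alpha=0$, which your argument does not cover. There the relevant limit is the reversed double limit ($r\to0$ with $t$ fixed, then $t\to\infty$), and after dividing by $s\mu_\vf(U_r)$ the available bound on the spectral error is of order $|\log\mu_\vf(U_r)|$, which diverges as $r\to0$. Nor is this just a lossy estimate: for fixed $t$ and non-periodic $z$ one has $-\log\mu_\vf(\tau_r>t)\sim(t+1)\mu_\vf(U_r)$ while the leading spectral term contributes $\sim t\,\theta(z)\mu_\vf(U_r)$, so the ``error'' is genuinely of the same order as the main term divided by $\mu_\vf(U_r)$ and cannot simply be discarded; it only becomes negligible after the subsequent division by $t$ and the limit $t\to\infty$. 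The paper therefore treats $\alpha=0$ by a separate elementary argument: for $r$ small the preimages $f^{-j}(U_r)$, $0\le j\le t$, are pairwise disjoint in the non-periodic case, giving $\mu_\vf(\tau_r\le t)=(t+1)\mu_\vf(U_r)$ exactly, while the periodic case requires an inclusion--exclusion over the nested sets $U_r\cap f^{-ip}(U_r)$ (Lemma~\ref{lem:per}) to produce the factor $1-e^{S_p\vf(z)}$. You need to supply such a direct computation (or a substantially sharper, constant-identified expansion of $C(r)$) to complete the case $\alpha=0$.
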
 

\begin{remark}
If $f$ is continuous, then $I=\Icont$, while in the context of hitting time statistics, the case $z\in I\sm \Icont\neq \es$ is addressed in \cite[Section 3.3]{AytFreVai15}.
\end{remark}

\begin{remark}\label{rem:alpha 0}
As will be clear from the proof of Theorem~\ref{thm:exp}, the case $\alpha=0$ holds
in great generality:  The proof in Section~\ref{ssec:alpha 0} requires neither (F2)-(F4) nor {\bf (U)}.   
In the non-periodic case we require only that $z\in \Icont$.
In the periodic case, we require (F1), {\bf (P)} and the fact that $\mu_\vf$ is absolutely
continuous with respect to $m_\vf$ with density bounded away from 0 at $z$.  
Alternatively, if it is known that $e^{S_p\vf}$ is continuous at $z$, then (F1) is not needed.
\end{remark}

%%%%%%%%%%%%%%%%%%%%%%%%%%%%%%%%%%%%%

\subsection{Preliminaries}
\label{prelim}

We begin by establishing some easy facts about the potential $\vf$.  Let 
$\Z^n = \bigvee_{i=0}^{n-1} f^{-i}(\Z)$ 
denote the maximal intervals on which $f^n$ is continuous and monotonic.

\begin{lemma}
\label{lem:pot}
Assuming (F1)-(F4), for all $n \ge 0$, the following hold:
\begin{itemize}
  \item[(a)]  $\sum_{Z \in \Z^n} e^{S_n \vf} < \infty$;
  \item[(b)]  for each $Z \in \Z$, $\bigvee_{Z} e^\vf \le C_d \sup_Z e^\vf$;
  \item[(c)]  $\bigvee_I e^{S_n\vf} < \infty$. 
\end{itemize}
\end{lemma}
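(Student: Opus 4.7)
The plan is to treat the three items in order, using (a) as the tail bound that feeds into the variation estimates in (b) and (c).

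For (a), I would exploit the Markov-style address structure of $\Z^n$. Each $Z\in\Z^n$ is contained in a unique $Z_{i_0}\cap f^{-1}Z_{i_1}\cap\cdots\cap f^{-(n-1)}Z_{i_{n-1}}$ with $Z_{i_j}\in\Z$. For $x\in Z$ we have $f^jx\in Z_{i_j}$, so $\vf(f^jx)\le \sup_{Z_{i_j}}\vf$, giving $\sup_Z e^{S_n\vf}\le \prod_{j=0}^{n-1}\sup_{Z_{i_j}}e^\vf$. Summing over $Z\in\Z^n$ and majorizing by the product of sums yields
$$\sum_{Z\in\Z^n}\sup_Z e^{S_n\vf}\le \Bigl(\sum_{Z\in\Z}\sup_Z e^\vf\Bigr)^{n},$$
which is finite by (F2).

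For (b), fix $Z\in\Z$ and a finite partition $x_0<x_1<\cdots<x_k$ in $Z$. Applying (F1) with $n=1$,
$$|e^{\vf(x_{i+1})}-e^{\vf(x_i)}|=e^{\vf(x_i)}\bigl|e^{\vf(x_{i+1})-\vf(x_i)}-1\bigr|\le C_d\,e^{\vf(x_i)}|fx_{i+1}-fx_i|.$$
Summing over $i$ and using monotonicity of $f$ on $Z$, the $|fx_{i+1}-fx_i|$ telescope to $|f(Z)|\le|I|=1$, giving $\sum_i|e^{\vf(x_{i+1})}-e^{\vf(x_i)}|\le C_d\sup_Z e^\vf$. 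Taking the supremum over partitions yields the claim.

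For (c), I apply (F1) directly to $S_n\vf$: for $x,y$ in the same $Z\in\Z^n$, $|e^{S_n\vf(x)}-e^{S_n\vf(y)}|\le C_d\,e^{S_n\vf(y)}|f^nx-f^ny|$. The same telescoping argument as in (b), now using monotonicity of $f^n$ on $Z$, gives $\bigvee_Z e^{S_n\vf}\le C_d\sup_Z e^{S_n\vf}$. To pass from elements of $\Z^n$ to the whole interval $I$, I refine any finite partition of $I$ to include the (at most countable) boundary points of $\Z^n$; the sum of increments then splits into a part lying inside single elements of $\Z^n$ (controlled by the previous bound) and a part coming from jumps across adjacent boundary points (each bounded by the sup of $e^{S_n\vf}$ on the two adjacent elements). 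Consequently,
$$\bigvee_I e^{S_n\vf}\le (C_d+2)\sum_{Z\in\Z^n}\sup_Z e^{S_n\vf},$$
which is finite by (a).

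The only mildly delicate point is this last decomposition: one must verify that summing over a countable family of intervals is legitimate, i.e.\ that every finite partition of $I$ can be refined to respect $\Z^n$ while only increasing the variation sum. This is a standard reduction, and everything else is bookkeeping on top of (F1)-(F2).
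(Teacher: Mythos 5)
Your proof is correct, and it follows the same outline as the paper's. A few remarks on where you differ. For (b), the paper's one-line justification elides the telescoping step: summing the per-term bound $|e^{\vf(x_{i+1})}-e^{\vf(x_i)}|\le C_d e^{\vf(x_i)}|fx_{i+1}-fx_i|$ and using $\sum_i |fx_{i+1}-fx_i|=|f(Z)|\le 1$ (by monotonicity of $f$ on $Z$) is exactly what is needed, and you spell it out; without this, the per-term bound alone would give a quantity that grows with the number of partition points. For (c), the paper proves the base case from (b) and (F2) and then appeals to a ``standard induction'' on $n$, whereas you observe that (F1) applies directly to $S_n\vf$ for pairs $x,y$ in the same element of $\Z^n$ (which is precisely the condition that $f^ix,f^iy$ lie in the same element of $\Z$ for $i<n$), yielding $\bigvee_Z e^{S_n\vf}\le C_d\sup_Z e^{S_n\vf}$ in one stroke. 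This is the cleaner route: the inductive step requires handling the variation of a product involving $e^{S_n\vf}\circ f$, which is not entirely trivial, and your direct argument sidesteps it. The $(C_d+2)$ constant is right once one accounts for the boundary contributions correctly: for each $Z\in\Z^n$, the partition points falling in $Z$ form a single consecutive run (since $Z$ is an interval), so $\sup_Z e^{S_n\vf}$ can appear at most twice in the cross-element terms, once at each end of the run.

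One small imprecision: you speak of refining the partition ``to include the (at most countable) boundary points of $\Z^n$.'' What you actually need (and implicitly do) is to insert \emph{finitely many} boundary points, one between each pair of consecutive partition points that lie in different elements of $\Z^n$; this keeps the partition finite and is sufficient for the splitting. In fact you can avoid refining entirely by just grouping terms as above: same-element pairs contribute at most $\sum_Z\bigvee_Z e^{S_n\vf}$, and cross-element pairs contribute at most $2\sum_Z\sup_Z e^{S_n\vf}$. Either way the argument closes, and the rest matches the paper.
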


\begin{proof}
(a) follows from a standard inductive argument using (F2).  

(b) follows from (F1) since
 $|e^{\vf(x_{i+1})} - e^{\vf(x_i)}| \le C_d e^{\vf(x_i)}$ for any set $\{ x_i \}_{i=1}^k \subset Z$.
 
For $n=0$, (c) follows from (b) and (F2).  Note that setting $\vf|_D = -\infty$ only adds
a term bounded by the series in (F2) to the variation.  For $n \ge 1$, the argument again
follows from a standard induction.
\end{proof}

Potentials satisfying the above properties in addition to (F3) are called
{\em contracting potentials} in the literature (see for example, \cite{Rychlik, LSV1}),
while (F4) is called the {\em covering property.}
However, we require
(F1) in order to obtain the stronger form of Lasota-Yorke inequalities in Proposition~\ref{prop:LY},
which we shall need to apply perturbation theory to the open systems $(\f_r, U_r)$,
considering $U_r$ as a hole.\footnote{
For a way to relax condition
(F1) by requiring only a H\"older bound on distortion, 
see the application of Theorem~\ref{thm:exp} to the Gauss map in Section~\ref{gauss}.}
We will prove Theorem~\ref{thm:exp} using the fact that the transfer operators associated with both the closed and open systems
have spectral gaps and their spectral projectors vary in some uniform way with the size of the hole.

Let
$\B$ be the set of functions of bounded variation on $I$ equipped with the variation norm
$\| \psi \| = \bigvee_I \psi + |\psi|_1$, where $| \cdot |_1$ denotes the $L^1$-norm with respect to 
the conformal measure $m_\vf$.

Using Lemma~\ref{lem:pot} and \eqref{eq:n1 def}, the operator
$\Lp_\vf^{n_1}$ satisfies the assumptions of \cite[Theorem 1]{Rychlik}, so that $\Lp_\vf^{n_1}$
is quasi-compact as an operator on $\B$.  Now using the decomposition in 
\cite[Theorem 3]{Rychlik} and the covering property (F4), it follows that $\Lp_\vf^{n_1}$ has
a simple eigenvalue at $1$, and no other eigenvalue can have modulus 
$1$, i.e., $\Lp_\vf^{n_1}$
has a spectral gap.  Using again
Lemma~\ref{lem:pot}(b),(c), since $\Lp_\vf$ is a bounded operator on $\B$, it also has a spectral
gap.  This will be the starting point from which we will perturb.

%%%%%%%%%%%%%%%%%%%%%%%%%%%%%%%%

\subsection{Notation and Initial Results for Open Systems}
\label{open}

In this section, we first summarise standard notation for open systems that we will use
throughout the paper.  We then proceed to prove the existence of a uniform spectral gap
for a family of associated transfer operators.

Recall that if we regard $U_r$ as a hole, then the set of points that has not entered $U_r$
by time $n$ is denoted $\I_r^n =\cap_{i=0}^n f^{-i}(I \setminus U_r)$, and the map corresponding to the open system is
simply the restriction $\f_r^n := f^n|_{\I_r^{n-1}}$.  Notice that by definition of the escape time
$e_r$ (see Section~\ref{sec:hitting}), we have $\{ e_r > n \} = \I_r^n$.

The transfer operator for the open system and its iterates are defined for $\psi \in L^1(m_\vf)$ by
\begin{equation}
\label{eq:open transfer}
\hLp_{\vf, U_r}^n \psi(x) = \sum_{y \in \f_r^{-n}x} \psi(y) e^{S_n\vf(y)} = \Lp_\vf^n (\psi 1_{\I_r^{n-1}})  ,
\end{equation}
for all $n \ge 1$.
Due to the conformality of the measure $m_\vf$, we have the following useful identity,
\begin{equation}
\label{eq:cov}
\int_I \hLp^n_{\vf, U_r} \psi \, dm_\vf = \int_I \Lp^n_\vf(\psi 1_{\I_r^{-n-1}}) \, dm_\vf
= \int_{\I_r^{n-1}} \psi \, dm_\vf .
\end{equation}
The importance of the above relation is the connection it provides
between the escape rate with respect to the measure $\psi dm_\vf$ and 
the spectral radius of $\hLp_{\vf, U_r}$ acting on functions of bounded variation (see Proposition~\ref{prop:LY}). 

Since we fix the potential $\vf$, for ease of notation and to emphasise the relationships
among the operators, in what follows we will denote
$\hLp_r := \hLp_{\vf, U_r}$ and $\Lp_0 := \Lp_{\vf}$.  Similarly, we denote by
$g_0$ the invariant density for $\Lp_0$, $m_0$ the conformal measure,
and $\mu_0 = g_0 m_0$ the invariant measure for the closed system.

Due to {\bf (U)}, we have the following set of uniform Lasota-Yorke 
inequalities for this family of operators.

\begin{proposition}
\label{prop:LY}
There exists $C_0>0$ and $\sigma < 1$ such that for any $\psi \in \B$, $r \in [0,r_0]$ and all $n \geq 0$,
\[
\begin{split}
\| \hLp_r^n \psi \| & \le C_0 \sigma^{-n} \| \psi \| + C_0 \int_{\I^{n-1}_r} |\psi| \, dm_0 , \\
| \hLp_r^n \psi |_1 & \le \int_{\I^{n-1}_r} |\psi | \, dm_0  .
\end{split}
\]
\end{proposition}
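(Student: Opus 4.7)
The plan is to prove the two bounds separately; the $L^1$ bound is essentially immediate, while the variation bound follows the standard two-scale Lasota--Yorke argument, adapted so that the hole contributes no $r$-dependent error thanks to \textbf{(U)}.

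For the $L^1$ inequality, since $|\hLp_r^n\psi|\le \hLp_r^n|\psi|$ pointwise, the conformal identity \eqref{eq:cov} applied to $|\psi|$ gives
\[
|\hLp_r^n\psi|_1\;\le\;\int_I \hLp_r^n|\psi|\,dm_0\;=\;\int_{\I_r^{n-1}}|\psi|\,dm_0,
\]
as required. For the variation, I would first establish a single-step inequality at the iterate $n_1$ from \eqref{eq:n1 def}, namely $\bigvee \hLp_r^{n_1}\psi\le \rho \bigvee\psi+K|\psi|_1$ with $\rho<1$ uniform in $r\in[0,r_0]$, and then iterate. Decompose the image over the intervals $Z\in\cI_r^{n_1-1}$ of monotonicity of $\f_r^{n_1}$. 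On each such $Z$, Lemma~\ref{lem:pot}(b),(c) together with the distortion bound (F1) yields the textbook estimate
\[
\bigvee_{\f_r^{n_1}(Z)}\bigl(\psi\circ(\f_r^{n_1}|_Z)^{-1}\cdot e^{S_{n_1}\vf\circ(\f_r^{n_1}|_Z)^{-1}}\bigr)\le(1+C_d)\Bigl(\sup_Z e^{S_{n_1}\vf}\,\bigvee_Z\psi\;+\;2\sup_Z(e^{S_{n_1}\vf}|\psi|)\Bigr).
\]
Summed over $Z$, the first term contributes at most $(1+C_d)\sup_I e^{S_{n_1}\vf}\bigvee\psi$. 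The $\sup$ terms are traded via $\sup_Z\le\inf_Z+\bigvee_Z$; the $\inf_Z$ piece is where \textbf{(U)} enters, giving $\inf_Z(e^{S_{n_1}\vf}|\psi|)\le c_0^{-1}(1+C_d)\int_Z|\psi|\,dm_0$ by bounded distortion and the lower bound $m_0(\f_r^{n_1}(Z))\ge c_0$, and summing produces a $c_0^{-1}(1+C_d)|\psi|_1$ contribution. The leftover $\bigvee_Z$ pieces are absorbed back into the main variation term, and the choice \eqref{eq:n1 def} makes the overall coefficient $\rho=(2+2C_d)\sup_I e^{S_{n_1}\vf}<1$.

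To conclude, I would iterate the $n_1$-step inequality: writing $n=kn_1+j$ with $0\le j<n_1$, applying the contraction $k$ times and using the uniform bound $\|\hLp_r^j\|\le K_j$ (which follows from Lemma~\ref{lem:pot} and (F1)) gives $\bigvee \hLp_r^n\psi\le C\sigma^n\bigvee\psi+C|\hLp_r^{n-n_1}\psi|_1$ with $\sigma=\rho^{1/n_1}<1$. Then the already-proved $L^1$ bound replaces $|\hLp_r^{n-n_1}\psi|_1$ by $\int_{\I_r^{n-n_1-1}}|\psi|\,dm_0\le \int_{\I_r^{n-1}}|\psi|\,dm_0$, and adding $|\hLp_r^n\psi|_1$ to reconstitute the norm $\|\cdot\|$ delivers the stated inequality (with $\sigma^n$ rather than $\sigma^{-n}$, which I take to be a typo). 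The principal obstacle is precisely the uniformity in $r$: opening $U_r$ introduces new partition cuts and jump discontinuities at $\partial U_r$, and without the uniform large-image hypothesis \textbf{(U)} the $\sup$-to-$L^1$ trade would degenerate as $r\to 0$. Everything else is a direct adaptation of Rychlik's machinery already invoked in Section~\ref{prelim}.
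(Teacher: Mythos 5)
Your core $n_1$-block estimate is the paper's argument in Appendix~\ref{appendix} in all but cosmetic respects: the same decomposition over monotonicity intervals of $\f_r^{n_1}$, the same use of (F1) and Lemma~\ref{lem:pot}(b) to control $\bigvee e^{S_{n_1}\vf}$, the same trade of sup against an $L^1$ average with \textbf{(U)} supplying the uniform-in-$r$ constant via $m_0(\f_r^{n_1}J)\ge c_0$, and the same contraction coefficient $(2+2C_d)\sup_I e^{S_{n_1}\vf}<1$ coming from \eqref{eq:n1 def}. The $L^1$ bound via \eqref{eq:cov} and the reading of $\sigma^{-n}$ as a typo for $\sigma^n$ are likewise exactly as in the paper.

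The iteration step, however, is written incorrectly. You replace $|\hLp_r^{n-n_1}\psi|_1$ by $\int_{\I_r^{n-n_1-1}}|\psi|\,dm_0$ and then claim this is $\le \int_{\I_r^{n-1}}|\psi|\,dm_0$; the inclusion goes the other way ($\I_r^{n-1}\subset\I_r^{n-n_1-1}$, since surviving longer is a stronger condition), so your chain only bounds $\bigvee\hLp_r^n\psi$ by the integral over the \emph{larger} set, which does not give the stated conclusion. The way to land on the correct set is not to discard the restriction: applying the $n_1$-block inequality to $\phi=\hLp_r^{n-n_1}\psi$ produces the term $\int_{\I_r^{n_1-1}}|\hLp_r^{n-n_1}\psi|\,dm_0$, and by conformality (as in \eqref{eq:cov}) $\int_{\I_r^{n_1-1}}\hLp_r^{m}|\psi|\,dm_0=\int_{\I_r^{m+n_1-1}}|\psi|\,dm_0$, which with $m=n-n_1$ is exactly $\int_{\I_r^{n-1}}|\psi|\,dm_0$. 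Relatedly, your assertion that $k$-fold application collapses to a single remainder $C|\hLp_r^{n-n_1}\psi|_1$ is not what the telescoping yields: one obtains $\sum_{j}\bar\sigma^{j}\int_{\I_r^{(k-j)n_1-1}}|\psi|\,dm_0$, whose $j\ge1$ terms live on larger survivor sets and must either be carried along or estimated separately (the paper's ``this relation can be iterated'' is admittedly just as terse on this point, but your version introduces a concretely false inequality rather than an omission).
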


The proof is by now fairly standard, even in this generalised context.  Since our assumptions
and estimates necessarily differ
 from those appearing in the literature for closed systems (given that we must show uniformity of the constants
$C_0$ and $\sigma$ in the sequence $(U_r)$ as well as the fact that we require decay in the
$L^1$ term), we include the proof for completeness in the
appendix. 

It follows from Proposition~\ref{prop:LY}, the compactness of the unit ball of $\B$ in $L^1(m_0)$,
and the conformality of $m_0$ that
the spectral radius of $\hLp_r$ acting in $\B$
is at most one while its essential spectral radius
is bounded by $\sigma^{-1} <1$. Thus $\hLp_r$ is quasi-compact as an operator on $\B$,
as is $\Lp_0$.
In addition, defining the following perturbative norm, 
\[
||| \Lp_0 - \hLp_r ||| = \sup \{ |\Lp_0 \psi - \hLp_r \psi|_1 : \| \psi \| \le 1 \},
\]
we have the following bound.

\begin{lemma}
\label{lem:close}
$||| \Lp_0 - \hLp_r ||| \le m_0(U_r) \le C_1 \mu_0(U_r)$, where $C_1^{-1} = \mbox{essinf } g_0$.
\end{lemma}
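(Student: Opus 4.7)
The plan is to exploit the fact that the difference $\Lp_0 - \hLp_r$ is essentially the transfer operator applied to the restriction on the hole $U_r$, then use conformality to convert an $L^1$ integral of a transfer operator image into an integral over the hole, and finally bound everything uniformly using the structure of the BV norm.

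First I would write, using \eqref{eq:open transfer} with $n=1$,
\[
\hLp_r \psi = \Lp_0(\psi \cdot 1_{I \setminus U_r}),
\qquad \text{hence} \qquad
(\Lp_0 - \hLp_r)\psi = \Lp_0(\psi \cdot 1_{U_r}).
\]
Because $\Lp_0$ is a positive operator, $|\Lp_0(\psi \cdot 1_{U_r})| \le \Lp_0(|\psi|\cdot 1_{U_r})$ pointwise, so the conformality identity \eqref{eq:cov} (with $n=1$) gives
\[
|(\Lp_0 - \hLp_r)\psi|_1 \le \int_I \Lp_0(|\psi| \cdot 1_{U_r})\, dm_0 = \int_{U_r} |\psi|\, dm_0 \le |\psi|_\infty \, m_0(U_r).
\]

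The next step is to bound $|\psi|_\infty$ uniformly for $\|\psi\|\le 1$. For any $x, y \in I$ one has $|\psi(x)| \le |\psi(y)| + \bigvee_I \psi$; taking the essential infimum in $y$ and using that $m_0$ is a probability measure (so $\mbox{essinf}|\psi| \le |\psi|_1$) yields
\[
|\psi|_\infty \;\le\; \bigvee_I \psi + \mbox{essinf}\, |\psi| \;\le\; \bigvee_I \psi + |\psi|_1 \;=\; \|\psi\| \;\le\; 1.
\]
Combining the two displays gives $|(\Lp_0 - \hLp_r)\psi|_1 \le m_0(U_r)$, so that $|||\Lp_0 - \hLp_r||| \le m_0(U_r)$, as claimed.

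For the second inequality, since $\mu_0 = g_0\, m_0$,
\[
\mu_0(U_r) \;=\; \int_{U_r} g_0 \, dm_0 \;\ge\; (\mbox{essinf}\, g_0)\, m_0(U_r) \;=\; C_1^{-1}\, m_0(U_r),
\]
which rearranges to $m_0(U_r) \le C_1 \mu_0(U_r)$. Here we are of course relying on the assertion from Section~\ref{prelim} that $g_0$ is bounded away from $0$, so that $C_1$ is finite. No real obstacle arises: the argument is essentially bookkeeping around conformality plus the standard $L^\infty \le \| \cdot\|_{\mathrm{BV}}$ bound on the unit torus/interval with a probability reference measure; the only subtle point is that one must use the $L^1$ (not sup) component of $\|\psi\|$ to replace $\mbox{essinf}|\psi|$, which is precisely why the variation norm is defined with the $L^1(m_\vf)$ term.
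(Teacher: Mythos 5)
Your proof is correct and follows essentially the same route as the paper: identify $(\Lp_0 - \hLp_r)\psi = \Lp_0(1_{U_r}\psi)$, use conformality of $m_0$ to reduce the $L^1$ norm to $\int_{U_r}|\psi|\,dm_0 \le |\psi|_\infty m_0(U_r)$, and then bound $m_0(U_r)$ by $C_1\mu_0(U_r)$ via $\mbox{essinf}\,g_0 > 0$. The only thing the paper leaves implicit that you spell out is the standard BV bound $|\psi|_\infty \le \bigvee_I\psi + |\psi|_1 = \|\psi\|$, which is indeed the reason the norm is defined with the $L^1(m_\vf)$ term; your reasoning there is sound.
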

\begin{proof}
The proof is immediate since if $\psi \in \B$ with $\| \psi \| \le 1$, we use the fact that
$m_0$ is $\vf$-conformal to estimate,
\[
\int |(\Lp_0 - \hLp_r)\psi | \, dm_0 = \int |\Lp_0(1_{U_r} \psi)| \, dm_0 \le |\psi|_\infty m_0(U_r),
\]
and the fact that $\mbox{essinf } g_0 > 0$ follows from (F4).
\end{proof}

\begin{corollary}
\label{cor:uniform gap}
There exists $r_1 \in (0, r_0]$ such that for all $r \in [0,r_1]$, the operators $\hLp_r$ have a uniform spectral gap on $\B$.  In particular, for $r>0$, 
there exist $\lambda_r < 1$, and linear operators $\Pi_r$, $R_r$, such that
\[
\hLp_r = \lambda_r \Pi_r + R_r,
\]
$\Pi_r^2 = \Pi_r$, $\Pi_r R_r = R_r \Pi_r = 0$ and the spectral radius of $R_r$ is at most
$\rho < \inf \{ \lambda_r : r < r_1\}$.  
The range of $\Pi_r$ is the span of a function $g_r \in \B$, satisfying
$\hLp_r g_r = \lambda_r g_r$, and normalised so that $\int g_r \, dm_0 =1$.

The above decomposition also holds for $r=0$ with $\lambda_0 = 1$.
\end{corollary}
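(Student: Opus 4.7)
The plan is to invoke the Keller--Liverani perturbation theorem. All the required hypotheses are already in hand: the unperturbed operator $\Lp_0$ has a simple leading eigenvalue at $1$ with spectral gap, established in Section~\ref{prelim} via Rychlik's theorems; Proposition~\ref{prop:LY} provides uniform Lasota--Yorke inequalities for the family $\{\hLp_r\}_{r \in [0,r_0]}$; and Lemma~\ref{lem:close} shows that $\hLp_r$ is close to $\Lp_0$ in the weak norm, namely $|||\Lp_0 - \hLp_r||| \le C_1 \mu_0(U_r) \to 0$ as $r \to 0$. Keller--Liverani then delivers exactly the decomposition claimed: for $r$ small enough (say $r \in [0,r_1]$), $\hLp_r$ has a simple eigenvalue $\lambda_r$ close to $1$ with rank-one projector $\Pi_r$ close to $\Pi_0$ in $|||\cdot|||$, while all remaining spectrum lies inside a disc of some radius $\rho < 1$ uniform in $r$. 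Setting $R_r = \hLp_r - \lambda_r \Pi_r$ and normalising the leading eigenfunction by $\int g_r \, dm_0 = 1$ yields the statement.

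The only remaining point is the strict inequality $\lambda_r < 1$ for $r \in (0, r_1]$. Here I would integrate the eigenvalue equation $\hLp_r g_r = \lambda_r g_r$ against $m_0$ and apply \eqref{eq:cov} with $n=1$:
\[
\lambda_r \;=\; \int_I \hLp_r g_r \, dm_0 \;=\; \int_{I \setminus U_r} g_r \, dm_0 \;=\; 1 - \int_{U_r} g_r \, dm_0.
\]
If $\lambda_r = 1$, then $g_r$ must vanish $m_0$-a.e.\ on $U_r$; but in that case $\hLp_r g_r = \Lp_0(g_r 1_{I \setminus U_r}) = \Lp_0 g_r$, so $g_r$ would be a leading eigenfunction of $\Lp_0$, and simplicity would force $g_r = c g_0$ for some $c > 0$. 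Since $g_0$ is bounded below by a positive constant (Section~\ref{prelim}), $g_0 > 0$ on $U_r$, contradicting the vanishing of $g_r$ there. Hence $\lambda_r < 1$.

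The main obstacle is essentially bookkeeping in the Keller--Liverani argument: the uniformity of the constants $C_0, \sigma$ in Proposition~\ref{prop:LY} across the whole family is what licenses perturbation theory with a hole of any size $r \le r_1$, and it is precisely the uniform large-image condition \textbf{(U)} that secures this uniformity. Once that is in place, the spectral decomposition is immediate, and the strict bound $\lambda_r < 1$ is a soft consequence of $m_0$-conformality together with the simplicity of the top eigenvalue of $\Lp_0$.
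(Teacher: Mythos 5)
Your argument is essentially the paper's: the paper invokes Rychlik for the spectral gap of $\Lp_0$, Proposition~\ref{prop:LY} for the uniform Lasota--Yorke inequalities, Lemma~\ref{lem:close} for smallness of $|||\Lp_0-\hLp_r|||$, and then cites \cite[Corollary~1]{KL pert} to get continuity of the isolated spectrum and projectors, which yields the uniform gap and the stated decomposition. You supplement this with an explicit proof that $\lambda_r<1$ for $r>0$, which the paper leaves implicit; that addition is correct in spirit, but the step from $\int_{U_r} g_r\,dm_0 = 0$ to ``$g_r$ vanishes $m_0$-a.e.\ on $U_r$'' tacitly uses that $g_r\ge 0$, which should be recorded (it follows because $g_r = \lim_n \lambda_r^{-n}\hLp_r^n g_0 / c_r$ is a limit of non-negative functions with $c_r>0$ forced by the normalisation $\int g_r\,dm_0=1$).
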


\begin{proof}
$\Lp_0$ has a spectral gap by \cite{Rychlik} and the discussion following Lemma~\ref{lem:pot}.
It follows from Proposition~\ref{prop:LY}, Lemma~\ref{lem:close} and 
\cite[Corollary 1]{KL pert} that the spectra and spectral projectors of $\hLp_r$
and $\Lp_0$ outside the disk of radius $\sigma$ vary continuously in $\mu_0(U_r)$.
Thus for $r$ sufficiently small, $\hLp_r$ inherits a spectral gap from $\Lp_0$, 
and by continuity, the spectral gap is uniform in $r$, yielding the existence of 
$\rho < \inf \{ \lambda_r : r < r_1 \}$
in the statement of the corollary.  
\end{proof}

We proceed to the proof of Theorem~\ref{thm:exp}, first proving the special cases $\alpha = \infty$
and $\alpha = 0$, and then turning to the general case $\alpha \in (0,\infty)$.

%%%%%%%%%%%%%%%%%%%%%%%%%%%%%%%%%%%%%%%

\subsection{Proof of Theorem~\ref{thm:exp}: The case $\alpha = \infty$}

To address the case corresponding to $\alpha = \infty$, we must compute the double limit,
\[
\lim_{r \to 0} \lim_{t \to \infty} \frac{1}{\mu_0(U_r)} \frac{-1}{t} \log \mu_0 (\tau_r > t) .
\]
For fixed $r \in (0, r_1]$, the spectral gap provided by Corollary~\ref{cor:uniform gap}
implies that the
escape rate with respect to $\mu_0$ is $- \log \lambda_r$, i.e., 
\[
\lim_{t\to \infty} \frac 1t \log \mu_0(\tau_r > t) = \lim_{t \to \infty} \frac{1}{t} \log \mu_0 (\I^t_r) = \log \lambda_r ,
\]
where we have used \eqref{eq:esc rate} as well as the fact that $\{ e_r > t \} = \I^t_r$.
(Indeed, the escape rate is $-\log \lambda_r$ with respect to the measure $\psi m_0$ for
any density $\psi \in \B$ that is bounded away from $0$.)

In order to show the limit $r \to 0$ converges to the claimed value, we will use the results of
\cite{KL zero}.  To do this we must check the necessary conditions given there, listed
 as (A1)-(A7).  In our setting, (A1)-(A3) are immediately satisfied by the existence of a uniform spectral gap for the operators $\hLp_r$ and the accompanying spectral decomposition given by 
Corollary~\ref{cor:uniform gap}.

(A4) requires that we normalise $m_0(g_r) =1$, which we have done, and that
there exists $C_2>0$ such that $\sup_{r \in [0,r_1]} \| g_r \| \le C_2$, i.e., the conditionally
invariant densities are uniformly bounded in $\B$.  This follows from the uniform
Lasota-Yorke inequalities given by Proposition~\ref{prop:LY} applied to $g_r$:
\[
\lambda_r^n \| g_r \| = \| \hLp_r^n g_r \| \le C_0\sigma^n \| g_r \| + C_0 \int_{\I^n_r} g_r \, dm_0
= C_0\sigma^n \| g_r \| + C_0 \lambda_r^n,
\] 
where we have used the fact that $\int_{\I^n_r} g_r \, dm_0 = \int \hLp_r^n g_r \, dm_0$, by
conformality.  Since $\sigma < \lambda_r$ in the spectral gap regime, we 
let $n\to \infty$ and conclude that
$\| g_r \| \le C_0$ independently of $r \in [0, r_1]$.

(A5) requires that $\eta_r := \| m_0(\Lp_0 - \hLp_r) \| \to 0$ as $r \to 0$, where $\| \cdot \|$ is the
norm of the linear functional  $m_0(\Lp_0 - \hLp_r) : \B \to \R$.  This is precisely 
Lemma~\ref{lem:close}, since if $\psi \in \B$, we have
\[
| m_0((\Lp_0 - \hLp_r)(\psi))| \le \int |(\Lp_0 - \hLp_r) \psi | \, dm_0 \le \| \psi \| m_0(U_r) ,
\]
so that $\eta_r = m_0(U_r)$.

(A6) requires\footnote{\cite{KL zero} actually states this bound, and subsequent ones, in terms of
a more general quantity, $\Delta_r$; however, in the present context,
$\Delta_r = \int (\Lp_0 - \hLp_r) g_0 \, dm_0 = \int_{U_r} g_0 \, dm_0 = \mu_0(U_r)$, and we will
use this simpler expression in what follows.}
$\eta_r \cdot \| (\Lp_0 - \hLp_r) g_0 \| \le C_3 \mu_0(U_r)$, for some $C_3 > 0$.
This is satisfied since (as noted in Lemma~\ref{lem:close}), essinf $g_0 = C_1^{-1} > 0$.
Thus,
\begin{align*}
\eta_r \cdot \| (\Lp_0 - \hLp_r) g_0 \| & = m_0(U_r) \| \Lp_0 (1_{U_r} g_0 ) \|
\le C_1 \mu_0(U_r) \| \Lp_0 \| \| 1_{U_r} g_0 \| \\
& \le C_1 \| \Lp_0 \| (\| g_0 \| + 2 |g_0|_\infty) \mu_0(U_r)
\le 3C_1 C_0 \| \Lp_0 \| \mu_0(U_r),
\end{align*}
as required.

Finally, (A7) requires that the limit
\[
q_k := \lim_{r \to 0} q_{k,r} := 
\lim_{r \to 0} \frac{m_0((\Lp_0 - \hLp_r) \hLp_r^k (\Lp_0 - \hLp_r)(g_0))}{\mu_0(U_r)},
\]
exists for each integer $k \ge 0$.
Notice that by conformality and using the fact that $\Lp_0 - \hLp_r = \Lp_0(1_{U_r} \cdot)$, we have
\[
m_0((\Lp_0 - \hLp_r) \hLp_r^k (\Lp_0 - \hLp_r)(g_0))
= \int 1_{U_r} \circ f^{k+1} \cdot 1_{\I^{k-1}_r} \circ f \cdot 1_{U_r} \cdot g_0 \, dm_0 .
\]
The product of indicator functions in the above expression is equivalent to 
the indicator function of the set
\[
E^k_r = \{ x \in U_r : f^i(x) \notin U_r, i = 1, \ldots, k, \mbox{ and } f^{k+1}(x) \in U_r \}.
\]
So $q_{k,r} = \frac{\mu_0(E^k_r)}{\mu_0(U_r)}$.  

If $z$ is not periodic, recall that since $z\in \Icont$,  $f^k$ is continuous at $z$ for each $k \in \N$, so for fixed $k$, the set $E^k_r$ is empty for all $r$ sufficiently small.  Thus $q_k = 0$ for all
$k \ge 0$.  On the other hand, if $z$ is periodic with prime period $p$, then 
for sufficiently small $r$, $E^k_r$ is empty except when $k = p-1$.  In this case, we use the
monotonicity and continuity of $f^p$ at $z$ (assumption {\bf (P)})
to conclude that $f^p(E^{p-1}_r) = U_r$.  Let $f_1$ denote this branch of $f^p$.
Now the continuity
of $g_0$ at $z$ and (F1) yield,
\[
q_{p-1} = \lim_{r \to 0} \tfrac{1}{\mu_0(U_r)} \int_{E^{p-1}_r} g_0 \, dm_0  
= \lim_{r \to 0} \tfrac{1}{\mu_0(U_r)} \int_{U_r} e^{S_p\vf \circ f_1^{-p}} g_0 \circ f_1^{-p} dm_0
= e^{S_p\vf(z)} ,
\]
where we have used the fact that $f_1^{-p}(z) = z$.

Having verified conditions (A1)-(A7) in our setting, we conclude by
\cite[Theorem~2.1]{KL zero}, that for $z \in \Icont$, 
\begin{equation}\label{eq:exp formula}
\lim_{r \to 0} \frac{-1}{\mu_0(U_r)} \log \lambda_r =  
\begin{cases}
1, & \mbox{if $z$ is not periodic}\\
1 - e^{S_p\vf(z)}, & \mbox{if $z$ is $p$-periodic}. 
\end{cases}
\end{equation}

%%%%%%%%%%%%%%%%%%%%%%%%%%%%%%%%%%%%%%%

\subsection{Proof of Theorem~\ref{thm:exp}:  The case $\alpha = 0$}
\label{ssec:alpha 0}

Since we will not need the measures $\mu_r$ in this section, for simplicity,
we will denote $\mu_0$ simply by $\mu$ and $m_0$ by $m$.

We fix $t$ and consider the limit $\lim_{r \to 0} \mu(U_r)^{-1} \log \mu(\tau_r > t)$.  Note that the set
$\{ \tau_r \le t \} = \cup_{j=0}^{t} f^{-j}(U_r)$.  

{\em  Case 1:  Nonperiodic $z$}.
Assume that $z\in \Icont$ is not a periodic point for $f$. 
 Then we may choose $r$ sufficiently small that the sets $f^{-j}(U_r)$, $j = 0, \ldots, t$,
are pairwise disjoint.  Thus,
\[
\begin{split}
\lim_{r \to 0} \mu(U_r)^{-1} \log \mu(\tau_r > t) & = \lim_{r \to 0} \mu(U_r)^{-1} \log (1 - \mu(\tau_r \le t)) \\
& = \lim_{r \to 0} \mu(U_r)^{-1} \log (1 - (t+1)\mu(U_r))
= - (t+1) .
\end{split}
\]
Proceeding to the second limit, we complete the proof of this case,
\[
\lim_{t \to \infty} \lim_{r \to 0} - t^{-1} \mu(U_r)^{-1} \log \mu(\tau_r > t) = \lim_{t \to \infty} t^{-1}(t+1) = 1 .
\]

{\em Case 2: Periodic $z$}.  Fix $z$ of prime period $p$ for $f$, satisfying {\bf (P)}.
Choose $r$ sufficiently small that the sets $f^{-i}(U_r)$, $i = 0, \ldots, p-1$ are pairwise disjoint.  This choice forces
$f^{-i}(U_r) \cap f^{-j}(U_r) = \emptyset$ except when $i-j$ is a multiple of $p$.

Suppose $t \in \N$ satisfies $t = (k+1)p -1$ for some $k \ge 0$.  Then
\[
\{ \tau_r \le t \} = \cup_{i=0}^k \cup_{j=0}^{p-1} f^{-ip-j}(U_r) .
\]
Note that by the above observation regarding when two pre-images of $U_r$ may intersect, we conclude that the sets in the union above
are disjoint for distinct $j$.  Thus,
\[
\mu(\tau_r \le t) = \sum_{j=0}^{p-1} \mu(\cup_{i=0}^k f^{-ip-j}(U_r)) = p \mu(\cup_{i=0}^k f^{-ip}(U_r))  .
\]

To estimate the measure of the remaining set, we prove the following lemma.

\begin{lemma}\label{lem:per}
Let $z$ be a point of continuity of $g_0$ of prime period $p$. 
For $\ve >0$, let $U_r$ be a sufficiently small
neighbourhood of $z$ with diam$(U_r) < \ve$
such that $f^p$ is monotonic on $U_r$ and for each $x \in U_r$,  
$|g_0(x) - g_0(z)| \le \ve$. 
If $k$ is such that $U_r, U_r \cap f^{-p}(U_r), \ldots, U_r \cap f^{-kp}(U_r)$ forms a decreasing sequence of sets, then
\[
\mu(\cup_{i=0}^k f^{-ip}(U_r)) = \mu(U_r) ( k+1 - k e^{S_p\vf(z)}(1 \pm \bar{C}\ve)),
\]
where $\bar{C} = C_d + C_1$, $C_d$ is from (F1) and $C_1$ is from Lemma~\ref{lem:close}.
\end{lemma}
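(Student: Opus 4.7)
The plan is to split the computation of $\mu\bigl(\bigcup_{i=0}^{k} f^{-ip}(U_r)\bigr)$ into two steps: first a purely measure-theoretic reduction that uses $f$-invariance of $\mu$ together with the decreasing-sequence hypothesis to collapse the union to a two-term expression, and then a local conformality computation to evaluate the one remaining measure.

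For the reduction, I write $A_i := f^{-ip}(U_r)$ and $V_d := U_r \cap f^{-dp}(U_r)$. For any nonempty $S = \{i_0 < \cdots < i_m\} \subset \{0, \ldots, k\}$, the invariance of $\mu$ pushes $\bigcap_{i \in S} A_i$ onto $U_r \cap \bigcap_{j \ge 1} f^{-(i_j - i_0)p}(U_r) = \bigcap_{j \ge 1} V_{i_j - i_0}$, and by the decreasing-sequence hypothesis this equals $V_{\max S - \min S}$. Plugging this into inclusion--exclusion and grouping subsets by the spread $d = \max S - \min S$ and cardinality $m+1$, there are $(k-d+1)\binom{d-1}{m-1}$ such subsets. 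The inner alternating sum $\sum_{m=1}^{d}(-1)^m\binom{d-1}{m-1}$ vanishes for $d \ge 2$ by the binomial theorem and equals $-1$ for $d=1$, so the inclusion--exclusion collapses to the exact identity
\begin{equation*}
\mu\!\left(\bigcup_{i=0}^{k} A_i\right) = (k+1)\,\mu(U_r) - k\,\mu(V_1).
\end{equation*}

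For the evaluation of $\mu(V_1)$, I would argue that for $r$ sufficiently small $V_1 = U_r \cap f^{-p}(U_r)$ coincides with $f_1^{-1}(U_r)$, where $f_1$ is the local monotonic branch of $f^p$ at $z$ (whose existence is guaranteed by \textbf{(P)}; the other preimages of $z$ under $f^p$ are at positive distance from $z$ since $z \in \Icont$, hence do not meet $U_r$). Pulling back along this branch via $\vf$-conformality gives
\begin{equation*}
\mu(V_1) = \int_{U_r} g_0\bigl(f_1^{-1}(x)\bigr)\, e^{S_p\vf(f_1^{-1}(x))}\, dm_0(x).
\end{equation*}
Now (F1) yields $e^{S_p\vf(f_1^{-1}(x))} = e^{S_p\vf(z)}(1 \pm C_d \ve)$ uniformly for $x \in U_r$, continuity of $g_0$ at $z$ (assumption) gives $g_0(f_1^{-1}(x)) = g_0(z) \pm \ve$, and an analogous sandwich for $\mu(U_r) = \int_{U_r} g_0\, dm_0$ is immediate. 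Dividing, and absorbing the factor $1/g_0(z) \le C_1$ from Lemma~\ref{lem:close}, yields $\mu(V_1) = \mu(U_r)\, e^{S_p\vf(z)}(1 \pm \bar C \ve)$ with $\bar C$ of the claimed form $C_d + C_1$. Substituting into the reduction identity produces the stated expression.

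The main obstacle is the combinatorial collapse in the first step: verifying that the inclusion--exclusion sum, which a priori involves $\mu(V_d)$ for every $d \le k$, reduces to just the $d=0$ and $d=1$ contributions via the binomial cancellation. Once this exact identity is in hand, the remaining work is distortion bookkeeping, and the only subtle point is to factor $g_0(z)m_0(U_r)$ through $\mu(U_r)$ (rather than carrying both $m_0$ and $\mu$ in parallel) so that the final constant depends only on $C_d$ and $C_1$.
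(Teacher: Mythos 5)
Your proposal is correct, and the combinatorial first step is genuinely different from the paper's argument. The paper proves the identity $\mu\big(\bigcup_{i=0}^{k}f^{-ip}(U_r)\big)=(k+1)\mu(U_r)-k\,\mu(V_1)$ by induction on $k$: the key point is that, by the nested hypothesis, any overlap of $f^{-(k+1)p}(U_r)$ with $\bigcup_{i\le k}f^{-ip}(U_r)$ is already contained in $f^{-kp}(U_r)$, so each new preimage contributes exactly $\mu(U_r)-\mu(V_1)$ and the two-set inclusion--exclusion does all the work. You instead establish the same identity in one shot via full inclusion--exclusion, using the observation $\mu\big(\bigcap_{i\in S}A_i\big)=\mu(V_{\max S-\min S})$ (an equivalent consequence of the nesting plus $\mu$-invariance) and the binomial vanishing $\sum_{m\ge 1}(-1)^m\binom{d-1}{m-1}=0$ for $d\ge 2$. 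Both routes exploit exactly the same structural fact; your version makes the ``only $d=0,1$ survive'' mechanism explicit, whereas the paper's induction hides that cancellation but is shorter to check line by line. The second half of your argument (change of variables through the local branch $f_1$, then (F1) and continuity of $g_0$) is the same in spirit as the paper's, but your bookkeeping is slightly looser: sandwiching $g_0\circ f_1^{-1}$ and $g_0$ separately around $g_0(z)$ and then dividing the two resulting estimates for $\mu(V_1)$ and $\mu(U_r)$ produces a constant closer to $C_d+2C_1$. The paper gets $C_d+C_1$ by writing $g_0\circ f_1^{-p}e^{S_p\vf\circ f_1^{-p}}$ as $e^{S_p\vf(z)}g_0$ plus two additive error terms (one controlled by (F1), the other by the $\ve$-oscillation of $g_0$ and $m_0(U_r)\le C_1\mu(U_r)$), avoiding the division that doubles the $C_1$ contribution. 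This is only a constant-tracking discrepancy, not a conceptual gap, and can be repaired by adopting the paper's additive split in that step.
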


\begin{proof}
Write $V_r = U_r \cap f^{-p}(U_r)$.
The proof goes by induction on $k$.  For $k=1$, we have
\begin{equation}
\label{eq:base case}
\begin{split}
\mu(U_r \cup f^{-p}(U_r)) & = \mu(U_r) + \mu(f^{-p}(U_r)) - \mu(V_r) \\
& = 2\mu(U_r) - \mu(U_r \cap f^{-p}(U_r)).
\end{split}
\end{equation}

Letting $f_1^p$ denote the branch of $f^p$ mapping $V_r$ onto $U_r$ monotonically,
\begin{align*}
\mu(V_r)  &= \int_{U_r \cap f^{-p}(U_r)} g_0 \, dm = 
\int_{U_r} g_0 \circ f_1^{-p} \, e^{S_n\vf \circ f_1^{-p}} \, dm \\
& = \mu(U_r) e^{S_p\vf(z)} + \int_{U_r} g_0 \circ f_1^{-p} e^{S_p\vf \circ f_1^{-p}}
(1- e^{S_p\vf(z) - S_p \vf \circ f_1^{-p}}) \, dm  \\
& \qquad + e^{S_p\vf(z)} \int_{U_r} (g_0 \circ f_1^{-p} - g_0) \, dm 
\end{align*} 
Using (F1), the first integral on the right hand side is bounded by
\[
e^{S_p\vf(z)} C_d \mbox{diam}(U_r) \int_{U_r} g_0 \circ f_1^{-p} e^{S_p\vf \circ f_1^{-p}} \, dm
\le e^{S_p\vf(z)} C_d \mbox{diam}(U_r) \mu(U_r),
\]
where we have changed variables again for the last inequality.  The second integral
on the right hand side is bounded by,
\[
e^{S_p\vf(z)} \ve m(U_r) \le e^{S_p\vf(z)} C_1 \ve \mu(U_r),
\]
where $C_1$ is from Lemma~\ref{lem:close}.
Putting these estimates together and using the fact that diam$(U_r) < \ve$, we obtain,
\begin{equation}
\label{eq:translate}
\mu(V_r) = (1 \pm \bar{C} \ve) e^{S_p\vf(z)} \mu(U_r),
\end{equation}
where $\bar{C} = C_d + C_1$.

Plugging this into \eqref{eq:base case} yields the lemma for $k=1$.

Now suppose the statement holds for $k$ and consider the set,
\[
\cup_{i=0}^{k+1} f^{-ip}(U_r) = f^{-(k+1)p}(U_r) \cup ( \cup_{i=0}^k f^{-ip}(U_r)) =: f^{-(k+1)p}(U_r) \cup A_k .
\]

We claim that any intersection between $f^{-(k+1)p}(U_r)$ and $A_k$ necessarily belongs to $f^{-kp}(U_r)$.  To see this,
suppose $x \in f^{-(k+1)p}(U_r) \cap f^{-jp}(U_r)$ for some $j \le k$.  
Then $f^{jp}(x) \in U_r \cap f^{-(k+1-j)p}(U_r)$, 
which necessarily remains
in $U_r$ for the next $k+1-j$ iterates of $f^p$, due to the nested property of the 
sets $U_r \cap f^{-ip}(U_r)$.
In particular, $f^{(k-j)p}(f^{jp}x) \in V_r$.  Thus $x \in f^{-kp}(U_r)$.

Using this fact about intersection as well as \eqref{eq:translate}, we now estimate,
\[
\begin{split}
\mu(\cup_{i=0}^{k+1} f^{-ip}(U_r)) & = \mu( f^{-(k+1)p}(U_r)) + \mu(A_k) - \mu\left(f^{-(k+1)p}(U_r) \cap f^{-kp}(U_r)\right) \\
& = \mu(U_r) + \mu(A_k) - \mu(U_r \cap f^{-p}(U_r)) \\
& = \mu(U_r) + \mu(A_k) - (1 \pm \bar{C} \ve)\mu(U_r) e^{S_p\vf(z)} ,
\end{split}
\]
and the lemma is proved using the inductive hypothesis on $\mu(A_k)$.
\end{proof}

Using the lemma, we may estimate
\[
\begin{split}
\frac{1}{\mu(U_r)} & \log \mu(\tau_r >t) = \frac{1}{\mu(U_r)} \log \left(1 - p \mu( \cup_{i=0}^k f^{-ip}(U_r))\right) \\
& = \frac{1}{\mu(U_r)} \log \left( 1 - \mu(U_r)p \left[k+1 - k(1\pm \ve)e^{S_p\vf(z)} \right] \right) \\
& \xrightarrow[r \to 0]{} - \left[pk + p - pk(1\pm \bar{C} \ve) e^{S_p\vf(z)} \right].
\end{split}
\]
Now dividing by $-t$ and taking the limit as $t \to \infty$ completes the proof of the periodic case, up to an error
$\pm \ve e^{S_p\vf(z)}$.  Since $\ve$ was arbitrary, the case is proved
for $t$ of the form $(k+1)p-1$.  

For more general $t = kp + \ell$, for some $\ell = 0, \ldots, p-1$,
we have 
\[
\mu(\tau_r > (k+1)p -1) \le \mu(\tau_r > t) \le \mu(\tau_r > kp-1),
\]
and since the upper and lower bounds yield the same limit as $k \to \infty$, the limit for general $t$
exists and has the same value.

%%%%%%%%%%%%%%%%%%%%%%%%%%%%%%%%%%%%%

\subsection{Proof of Theorem~\ref{thm:exp}:  The case $\alpha \in (0, \infty)$} 
\label{ssec:deriv}

Fix $z \in I$, $\alpha \in (0, \infty)$, and a sequence of intervals $(U_r)_{r \in r_0}$ satisfying {\bf (U)}.  
If $z$ is periodic, we also assume {\bf (P)}.
Let $t = s \mu_0(U_r)^{-\alpha}$ for some $s \in \mathbb{R}^+$.  We must consider the limit,
\[
\lim_{r \to 0} \frac{1}{s \mu_0(U_r)^{1-\alpha}} \log \mu_0( \tau_r > s \mu_0(U_r)^{-\alpha}) .
\]
As in Section~\ref{prelim}, there exists $r_1>0$ such that all associated
transfer operators $\hLp_r$ have a uniform spectral gap on $\B$ for all $r \in [0, r_1]$.

To simplify notation, set $k_r = \lfloor s \mu_0(U_r)^{-\alpha} \rfloor$.  Notice that,
\[
\begin{split}
\mu_0(\tau_r > k_r) & = \int_{\I^{k_r}_r} g_0 \, dm_0 = \int \hLp_r^{k_r+1} g_0 \, dm_0  \\
& = \lambda_r^{k_r+1} \int \lambda_r^{-k_r-1} \hLp_r^{k_r+1} (g_0 - g_r) \, dm_0 + \lambda_r^{k_r+1} \int g_r \, dm_0,
\end{split}
\]
where $g_r$ is the unique normalised conditionally invariant density corresponding to $\lambda_r$
from Corollary~\ref{cor:uniform gap}.
Thus
\begin{equation}
\label{eq:alpha split}
\log \mu_0(\tau_r > k_r) = (k_r+1) \log \lambda_r +  \log \Big(1 + \int \lambda_r^{-k_r-1} \hLp_r^{k_r+1} (g_0 - g_r) \, dm_0 \Big).
\end{equation}
Notice that the first term above, when divided by $s\mu_0(U_r)^{1-\alpha}$, is simply $\mu_0(U_r)^{-1} \log \lambda_r$ (up to integer part) and thus
converges as $r \to 0$ to the required limit by \eqref{eq:exp formula}, which depends on
$z$.  

It remains to show that the second term in \eqref{eq:alpha split} converges to zero after division by $\mu_0(U_r)^{1-\alpha}$.
Using Corollary~\ref{cor:uniform gap}, we may decompose the transfer operator as 
$\hLp_r = \lambda_r \Pi_r + R_r$, where as before, $\Pi_r$ is the projection onto the eigenspace spanned by $g_r$ and the spectral radius of $R_r$ is strictly
less than $\lambda_r$.  Thus defining $\Pi_r g_0 = c_r g_r$ for some $c_r >0$, we have
\begin{equation}
\label{eq:error split}
\lambda_r^{-k_r-1} \hLp_r^{k_r+1} (g_0-g_r) = (c_r - 1)g_r + \lambda_r^{-k_r-1}R_r^{k_r+1} g_0,
\end{equation}
where we have used the facts, $\Pi_r^2 = \Pi_r$, $\Pi_r g_r = g_r$ and $R_r g_r = 0$.  Integrating, we have
\[
\log \Big( 1 + \int \lambda_r^{-k_r-1} \hLp_r^{k_r+1} (g_0 - g_r) \, dm_0 \Big)
= \log \Big( c_r + \int \lambda_r^{-k_r-1}R_r^{k_r+1} g_0 \, dm_0 \Big)  .
\]

Now since the operators 
$\hLp_r$ have a uniform spectral gap for $r$ close to 0, there exists $\beta >0$ such that the spectral radius of $\lambda_r^{-1}R_r$ in $\B$
is less than $e^{-\beta}$ for all $r$ sufficiently small.  Since the variation norm dominates the $L^\infty$ norm, we have,
\[
\left| \int  \lambda_r^{-k_r-1} R_r^{k_r+1} g_0 \, dm_0 \right| \le \|  \lambda_r^{-k_r-1} R_r^{k_r+1} g_0 \| \le C e^{-\beta (k_r+1)}
\le C e^{-\beta s \mu_0(U_r)^{-\alpha} },
\]
for some fixed $C>0$, and this quantity is super-exponentially small in $\mu_0(U_r)$.  Moreover, since by \cite[Corollary 1]{KL pert}, the spectral projectors
$\Pi_r$ of $\hLp_r$ vary by at most $-\mu_0(U_r) \log \mu_0(U_r)$ for small $r$ and $\Pi_0 g_0 = g_0$, i.e., $c_0 =1$,  we have $|1-c_r| \le -C \mu_0(U_r) \log \mu_0(U_r)$, for some uniform $C>0$.

Using these estimates in the second term of \eqref{eq:alpha split} and dividing by
$s\mu_0(U_r)^{1-\alpha}$, the relevant expression becomes,
\[
\lim_{r \to 0} \frac{1}{s \mu_0(U_r)^{1-\alpha}} \log \Big( 1 + \mathcal{O}(-\mu_0(U_r) \log \mu_0(U_r)) \Big) .
\]
For $\alpha \ge 1$, it suffices to note that $\log (1 + \mathcal{O}(-\mu_0(U_r) \log \mu_0(U_r)) )$ converges to $0$ as $r \to 0$ to conclude that the above limit
vanishes.  For $\alpha \in (0,1)$, we note that in addition $\frac{\mu_0(U_r)\log \mu_0(U_r)}{\mu_0(U_r)^{1-\alpha}} \to 0$ as $r \to 0$,
which completes the proof of Theorem~\ref{thm:exp}. 

%%%%%%%%%%%%%%%%%%%%%%%%%%%%%%%%%%%%%

\subsection{Examples}
\label{sec:examples}

In this section, we provide examples of several classes of maps and potentials
for which our assumptions (F1)-(F4) of Section~\ref{sec:exp mix} hold.
More general examples, including the existence of a conformal measure for contracting
potentials, can be constructed using \cite{LSV1}.

\subsubsection{Lasota-Yorke maps of the interval with $\vf = -\log |Df|$.}

Such maps are assumed to admit a finite partition $\Z$ of $I$ into intervals on which
$f$ is differentiable and $|Df| \ge \sigma^{-1} > 1$.  $f$ is assumed to
be $C^2$ on the closure of each $Z \in \Z$.

The conformal measure $m$ is Lebesgue measure on $I$, and (F1)-(F3) are standard 
consequences of uniform expansion, the existence of $D^2f$ and the finiteness of the partition $\Z$.

Since the potential is bounded, condition (F4) can be guaranteed by the equivalent condition that for each
interval $J$, there exists $n(J)$, such that $f^{n(J)}(J) = I \bmod 0$.

Once we fix $z = \cap_{r>0} U_r$ and $n_1$ from \eqref{eq:n1 def}, {\bf (U)}
is always satisfied for $r$ sufficiently small due to the finiteness of $\Z^{n_1}$.
Thus Theorem~\ref{thm:exp} holds for this class of maps.

\subsubsection{Mixing Gibbs-Markov maps with large images.}
\label{gibbs}

Assume that $f(Z)$ is a union of elements of $\Z$ for each $Z \in \Z$,
where $\Z$ is the countable partition defined at the beginning of Section~\ref{sec:exp mix}.
Thus $\Z$ is a Markov partition for $f$.  

We assume that $f$ satisfies the big images and pre-images\footnote{This is automatic
if $f$ is full-branched.} (BIP) property: 
there exists a finite set
$\{ Z_j \}_{j \in \mathcal{J}} \subset \Z$ such that 
$\forall Z \in \Z$, $\exists j, k \in \mathcal{J}$ such that
$f(Z_j) \supseteq Z$ and $f(Z) \supseteq Z_k$.
We also assume that $|Df| \ge \sigma^{-1} > 1$ on each $Z \in \Z$.

We assume that $\vf$ is a potential which is Lipschitz continuous on each $Z \in \Z$, 
and admits a non-atomic 
conformal probability measure $m_\vf$ with $m_\vf(I \setminus \cup_{Z \in \Z} Z) =0$.

Then (F1) follows immediately from the regularity of $\vf$ and the expansion of $f$, and we have a Gibbs-Markov map.
Condition (F2) follows from the
existence of $m_\vf$ and (BIP) since by (F1), $\sup_Z e^{\vf} \le (1+C_d) m_\vf(Z)/m_\vf(f(Z))$:
\[
\sum_{Z \in \Z} \sup_Z e^{\vf} \leq
(1+C_d)\sum_{Z \in \Z} \tfrac{m_\vf(Z)}{m_\vf(f(Z))} \le (1+C_d) c_0^{-1},
\]
where $c_0 = \inf_{Z \in \mathcal{Z}} m_\vf(f(Z)) > 0$ by (BIP).

(F4) follows from mixing plus (BIP).  For (F3), we use the fact that for maps
satisfying our assumptions, the transfer operator $\Lp_\vf$
acting on functions which are Lipschitz on each element of $\Z$ is
known to have a spectral gap.  Since $\Lp_\vf^n 1$ converges to an invariant
density that is bounded away from $0$ (by (F4)), 
the expression on the right side of (F3) is bounded away from $0$ for all $n$ large
enough.  On the other hand, the expression on the left side
of (F3) must tend to $0$ by conformality and (F1), since for $Z \in \mathcal{Z}^n$,
\[
\sup_Z e^{S_n\vf} \le (1+C_d) \tfrac{m_\vf(Z)}{m_\vf(f^nZ)} \le (1+C_d) \frac{m(Z)}{c_0},
\]
and the diameter of $\Z^n$ must tend to $0$ by the expansivity of $f$.

Having verified (F1)-(F4), we may apply Theorem~\ref{thm:exp} to this class
of Gibbs-Markov maps.  Note that we can always arrange for
{\bf (U)} to be satisfied as long as we do not
choose $\cap_r U_r$ to be an accumulation point of the endpoints of the intervals in $\Z^{n_1}$.

\subsubsection{Gauss map, $f(x) = 1/x \bmod 1$.}
\label{gauss}

In this case, $\vf = - \log |Df|$, $m_\vf$ is Lebesgue measure on $[0,1]$, the
invariant density is $g_0 = \frac{1}{\ln 2}\frac{1}{1+x}$, and  $f$ is continuously
differentiable on each element of the partition $\Z = \{ Z_j \}_{j=1}^\infty$, $Z_j = (1/(j+1), 1/j)$.

For this potential, (F1) fails. However this system is well known to satisfy Rychlik's conditions 
since the potential is monotonic on each branch;
moreover the potential satisfies the weaker (H\"older) distortion control given
by the following lemma.

\begin{lemma}
\label{lem:distortion}
$\exists \, C_d > 0$ s.t. $|e^{S_n\vf(x) - S_n\vf(y)} -1| \le C_d |f^nx - f^ny|^{1/2}$,
whenever $f^ix$, $f^iy$ lie in the same element of $\Z$ for $i=0, 1, \ldots, n-1$.
\end{lemma}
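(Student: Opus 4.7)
The plan is to combine (a) a uniform $1/2$-H\"older bound for $\vf$ on each branch $Z_j$ with (b) geometric contraction of $|f^i x - f^i y|$ as $i$ decreases, coming from uniform expansion of $f^2$. Since $|Df(u)| = 1/u^2$, we have $\vf(u) = 2 \log u$ on $(0,1]$.

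First I would establish the local $1/2$-H\"older bound. On $Z_j = (1/(j+1), 1/j)$ the derivative $\vf'(u) = 2/u$ is of size $O(j)$, while $\diam(Z_j) = 1/(j(j+1))$ is of size $O(j^{-2})$. Interpolating between the Lipschitz bound and the diameter gives
\[
|\vf(u) - \vf(v)| \le 2(j+1)|u-v| \le 2(j+1)\,(j(j+1))^{-1/2}\,|u-v|^{1/2} \le 2\sqrt{2}\,|u-v|^{1/2},
\]
uniformly in $j$. The failure of (F1) is precisely that $\vf$ is not globally Lipschitz across branches, but the balance between the derivative and the diameter forces the $1/2$-H\"older constant to be uniform.

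Second I would verify that $f^2$ is uniformly expanding, namely $|Df^2| \ge 4$ on $[0,1]$. For $x \in Z_j$ with $j \ge 2$, already $|Df(x)| = 1/x^2 \ge 4$ and $|Df(f(x))| \ge 1$, so $|Df^2(x)| \ge 4$. For $x \in Z_1 = (1/2,1)$, a direct computation using $f(x) = 1/x - 1$ gives $|Df^2(x)| = 1/((1/x-1)^2 x^2) = 1/(1-x)^2 \ge 4$ since $x \ge 1/2$. Since $f^i x, f^i y$ lie in the same element of $\Z$ for $i=0, \ldots, n-1$, the pair $(f^{n-2k}x, f^{n-2k}y)$ lies in a common branch of $f^2$, so the mean value theorem yields $|f^{n-2k}x - f^{n-2k}y| \le 4^{-k}|f^n x - f^n y|$; combined with the trivial bound $|f^{i}x - f^{i}y| \le |f^{i+1}x - f^{i+1}y|$ (from $|Df| \ge 1$) to absorb the odd-parity case, one obtains
\[
|f^i x - f^i y| \le C\, 2^{-(n-i)}\, |f^n x - f^n y| \qquad (0 \le i \le n).
\]

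Finally I would sum the per-step H\"older estimates along the orbit:
\[
|S_n \vf(x) - S_n\vf(y)| \le \sum_{i=0}^{n-1} |\vf(f^i x) - \vf(f^i y)| \le 2\sqrt{2}\sum_{i=0}^{n-1} |f^i x - f^i y|^{1/2} \le C'\,|f^n x - f^n y|^{1/2},
\]
the last step using the convergent geometric series $\sum_k 2^{-k/2}$. Since $|f^n x - f^n y| \le 1$, the left-hand side is uniformly bounded, so $|e^{a}-1| \le e|a|$ applies and gives the claimed bound $|e^{S_n\vf(x)-S_n\vf(y)} - 1| \le C_d|f^n x - f^n y|^{1/2}$. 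The only subtle step is the first, where the failure of Lipschitz regularity across branches must be salvaged by the precise balance of branch diameter against derivative size; the remaining two steps are essentially textbook Gauss-map calculations.
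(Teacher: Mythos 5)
Your proof is correct and follows essentially the same strategy as the paper's: a per-step $1/2$-H\"older bound on $\vf$ over each branch, obtained by balancing $|\vf'| = O(j)$ against $\diam(Z_j) = O(j^{-2})$, followed by geometric summation along the orbit driven by the uniform expansion of $f^2$. The paper phrases the contraction factor as $|e^{S_{n-i}\vf}|_\infty^{1/2}$ with $|e^{S_2\vf}|_\infty < 1$, which is the same fact as your explicit computation $|Df^2| \ge 4$, just in different notation.
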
 

Before proving the lemma, we will verify the other conditions and show that in this case,
Lemma~\ref{lem:distortion} suffices to prove Proposition~\ref{prop:LY}, so that the
conclusions of Theorem~\ref{thm:exp} hold.

(F2) is immediate since $\sup_{Z_j} e^{\vf} \le Cj^{-2}$, $\forall j \ge 1$.

Notice that $|e^{\vf}|_\infty \le 1$, while $|e^{S_2\vf}|_\infty < 1$.  Thus the expression on the
left side of (F3) decreases exponentially in $n$, while $\Lp_\vf^n 1$ converges to $g_0$, which
is bounded away from $0$ on $I$.  Thus (F3) holds.

(F4) holds since $f$ is full-branched, and the potential satisfies the distortion control given by
Lemma~\ref{lem:distortion}.

We verify also that the items of Lemma~\ref{lem:pot} hold:  (a) holds by induction on (F2);
(b) holds with $C_d=1$ since $e^\vf$ is monotonic on each $Z_j$, so that 
$\bigvee_{Z_j} e^{\vf} \le \sup_{Z_j} e^{\vf}$; (c) holds by induction on (b), using (a).

Next we show that the operators $\hLp_r$ satisfy the uniform Lasota-Yorke inequalities
of Proposition~\ref{prop:LY} under assumption {\bf (U)}.  The assumption (F1) is used in precisely
two places in the proof of the proposition:  in equations \eqref{eq:dist} and \eqref{eq:var split}.
For \eqref{eq:dist}, the H\"older distortion control given by Lemma~\ref{lem:distortion} suffices
to give precisely the same bound.  For \eqref{eq:var split}, we use
$\bigvee_{J_i} e^{S_n\vf} \le \sup_{J_i} e^{S_n\vf}$ by the monotonicity of $e^{S_n\vf}$ on each $J_i$.

With these estimates, the contracting term in \eqref{eq:merge} becomes
$4 |e^{S_n \vf}|_\infty \bigvee_I \psi$ (it is the same expression, but with $C_d=1$).  
Thus we need only choose $n_1$ such that
$4 |e^{S_{n_1}\vf}|_\infty < 1$, replacing \eqref{eq:n1 def}, in order to prove the required
Lasota-Yorke inequalities under assumption {\bf (U)}.  Note that since
$f$ is full-branched, we can arrange for {\bf (U)} to be satisfied as long as $\{ z \} = \cap_r U_r$ 
is not chosen to be an endpoint of $\Z^{n_1}$.  

Turning to the proof of Theorem~\ref{thm:exp}, condition (F1) is used directly in one 
additional place: the proof of Lemma~\ref{lem:per}.  In that case, 
using the H\"older bound given by Lemma~\ref{lem:distortion}, we need only replace 
diam$(U_r)$ by $\sqrt{\mbox{diam}(U_r)}$ and choose $U_r$ sufficiently small that
$\sqrt{\mbox{diam}(U_r)} < \ve$.  Then the rest of the proof of Lemma~\ref{lem:close}
goes through without changes.

With these minor changes to the proof, the conclusions of Theorem~\ref{thm:exp} apply to the
Gauss map.

\begin{proof}[Proof of Lemma~\ref{lem:distortion}]
Let $x, y$ be as in the statement of the lemma, and let $f^ix \in Z_{j_i}$.  
The following bounds are elementary, yet essential to what follows,
 \[
 \sup_{Z_j} \frac{|Df^2|}{|Df|} \le C j \qquad \mbox{while} \qquad
 \mbox{diam}(Z_j) \le Cj^{-2} .
\]
Using these estimates, one may complete the standard (H\"older) distortion estimate,
\[
\begin{split}
\left| \log \frac{Df^n(x)}{Df^n(y)}\right| & \le \sum_{i=0}^{n-1} \left| \log |Df(f^ix)| - \log |Df(f^iy)| \right| \\
& \le \sum_{i=0}^{n-1} \sup_{Z_{j_i}}\frac{|Df^2|}{|Df|} |f^i x - f^iy| 
\; \le \; \sum_{i=0}^{n-1} C |f^ix - f^iy|^{1/2} \\
& \le C |f^n x - f^n y|^{1/2} \sum_{i=0}^{n-1} |e^{S_{n-i}\vf}|_\infty^{1/2}.
\end{split}
\]
The final sum converges exponentially in $i$ because
$|e^\vf|_\infty \le 1$ and $|e^{S_2\vf}|_{\infty} < 1$.
\end{proof}

%%%%%%%%%%%%%%%%%%%%%%%%%%%%%%%%%%%%%

\section{Results via inducing}\label{sec:inducing}
In this section, we consider some cases in which the map $f:I \to I$ and potential
$\vf$ do not satisfy
(F1)-(F4) of Section~\ref{sec:exp mix}.  In such cases, a common strategy is to 
consider an induced map to a subset of $I$ with stronger statistical properties.
This is the situation we shall address in this section: explicit examples will be given in the following section.

We begin with a map $f:I \to I$, a conformal measure $m_\vf$ with potential $\vf$,
and an invariant probability measure $\mu_\vf$, absolutely continuous with respect 
to $m_\vf$. We will fix the potential and simply denote this measure 
by $\mu$ in this section.

Fixing a sequence of sets $( U_r )_{r \in [0,r_0]}$, we assume
that we can select an interval $Y$ with $\mu(Y) >0$ and $U_r \subset Y \subset I$,
such that the first return map $F = f^{R_Y}:Y \to Y$
and the induced potential $\Phi = \sum_{i=0}^{R_Y -1} \vf \circ f^i$
satisfy (F1)-(F4).  

Let $\mu_Y := \frac{1}{\mu(Y)} \mu|_Y$
be the $F$-invariant probability measure,
and $\tau_{Y,r}(y) = \min\{ u \ge 1 : F^u(y) \in U_r\}$
be the first hitting time for the set $U_r$, which we sometimes refer to as the hole.
Let $R_{Y, u}(y) = \sum_{i=0}^{u-1} R_Y \circ F^i(y)$ be the $u^{\mbox{\scriptsize th}}$ 
return time to $Y$.

\begin{remark} 
We will assume for simplicity that the hole is always in $Y$.
This is not much of a restriction because it
is generically possible, once the location of the hole is known (and it is not at an 
indifferent fixed point or a recurrent critical point), 
to select a set $Y$ with good return map containing the hole.
\end{remark}

For $\mu_Y$-a.e.\ $y \in Y$, we have $R_{Y, u}/u \to 1/\mu(Y)$,
but for our purposes we need specific estimates for
the large deviations $\mu_Y(A_u)$ for the set
$$
A_u=A_{Y, u, \eps} := \{ y \in Y : \exists n \ge u\text{ such that } |R_{Y,n} - n/\mu(Y)| > n\eps\}.
$$

Following \eqref{eq:Icont}, we define $\Ycont$ to be the set of
points in $Y$ at which $F^k$ is continuous for all $k \in \N$.  

\begin{theorem}
\label{thm:induce}
Suppose $f:I\to I$ is as above and there exists $Y\subset I$ with $z \in \Ycont$ such that the first 
return map $F=f^{R_Y}:Y\to Y$ satisfies the assumptions of Theorem~\ref{thm:exp}.  

\begin{enumerate}
\item  If for any small $\eps>0$, there exists $c(\eps)>0$ such that $\mu_Y(A_u)\le e^{-c(\eps)u}$ for all large $u$, then for each $\alpha\in (0, \infty]$,
\begin{equation}
L_{\alpha, s}(z) = 
\begin{cases}
1, & \mbox{if $z$ is not periodic,} \\
1 - e^{S_p\vf(z)}, & \mbox{if $z$ is $p$-periodic for $f$}.
\end{cases}\label{eq:master}
\end{equation}

\item If there exist $\gamma\in (0,1)$ such that for any small $\eps>0$, there exist $C, c(\eps)>0$ such that $\mu_Y(A_u)\le C e^{-c(\eps)u^\gamma}$ for all large $u$, then \eqref{eq:master} holds for each 
$\alpha< \frac1{1-\gamma}$. 

\item If there exists $\gamma\in (0,1)$ and $C,c>0$ such that $\mu_Y(R_Y \ge u)\ge C e^{-c u^\gamma}$ for all large $u$,  then
 $L_{\alpha, s}(z)=0$ for $\alpha> \frac1{1-\gamma}$ and each $z \in I$. 

\item  If  both $\mu_Y(A_u)$ and $\mu_Y(R_Y \ge u)$ decay superpolynomially in $u$, but more slowly than any stretched exponential, then 
\eqref{eq:master} holds if $\alpha\le 1$ and $ L_{\alpha, s}(z) = 0$ if $\alpha >1$ for each $z\in \Icont$. 
\end{enumerate}
\end{theorem}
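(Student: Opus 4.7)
The plan is to transport Theorem~\ref{thm:exp}, applied to the induced system $(F, \Phi)$ on $Y$, back to the original system $(f, \vf)$, using the large-deviation estimates to convert hitting-time scales. For $y \in Y \setminus U_r$ one has the identity $\tau_r(y) = R_{Y, \tau_{Y, r}(y)}(y)$, and on $A_{u_0}^c$ one has $R_{Y, n} = (n/\mu(Y))(1 \pm \eps)$ for all $n \ge u_0$. This produces a sandwich
\[
\mu_Y(\tau_{Y, r} > (1+\eps) t \mu(Y)) - E_r \le \mu_Y(\tau_r > t) \le \mu_Y(\tau_{Y, r} > (1-\eps) t \mu(Y)) + E_r,
\]
where $E_r$ collects the error $\mu_Y(A_{u_0})$ together with a short-time contribution coming from orbits with $\tau_{Y,r} < u_0$; a standard Kac-formula computation gives an analogous comparison between $\mu(\tau_r > t)$ and $\mu(Y) \mu_Y(\tau_r > t)$. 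Setting $t = s \mu(U_r)^{-\alpha}$ and $s' = s \mu(Y)^{1-\alpha}$, so that $s' \mu_Y(U_r)^{-\alpha} = t \mu(Y)$ and $s' \mu_Y(U_r)^{1-\alpha} = s \mu(U_r)^{1-\alpha}$, Theorem~\ref{thm:exp} applied to $(F, \Phi)$ then produces
\[
-\log \mu_Y(\tau_{Y, r} > t \mu(Y)) \sim \tilde L \cdot s \mu(U_r)^{1-\alpha},
\]
with $\tilde L = 1$ (non-periodic) or $\tilde L = 1 - e^{S_q \Phi(z)}$ ($q$-periodic under $F$). A $p$-periodic $z \in Y$ for $f$ is $q$-periodic under $F$ with $R_{Y, q}(z) = p$, so $S_q \Phi(z) = S_p \vf(z)$, matching \eqref{eq:master}; the case $\alpha = \infty$ follows analogously from the escape-rate identity $-\log \lambda_r^f = \mu(Y)(-\log \lambda_r^F)$, which is the $t \to \infty$ version of the same comparison.

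Parts (1), (2), and (4) (for $\alpha \le 1$) now reduce to choosing $u_0 = u_0(r)$ so that the error $E_r$ is subdominant to the main exponent on the logarithmic scale $s\mu(U_r)^{1-\alpha}$. In (1) any $u_0 = \mu(U_r)^{-\beta}$ with a suitable $\beta \in (0,1)$ works for every $\alpha$. In (2), balancing the requirements $u_0^\gamma \gg \mu(U_r)^{1-\alpha}$ and $u_0 \ll \mu(U_r)^{-\alpha}$ is feasible precisely when $\alpha < 1/(1-\gamma)$. In (4) with $\alpha \le 1$, the scale $s \mu(U_r)^{1-\alpha}$ is bounded, and any polynomial choice $u_0 = \mu(U_r)^{-\kappa}$ renders $E_r$ superpolynomially small in $\mu(U_r)$. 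The complementary regimes (3) with $\alpha > 1/(1-\gamma)$ and (4) with $\alpha > 1$, giving $L_{\alpha, s}(z) = 0$, are handled by a direct lower bound: since $U_r \subset Y$, any $y \in Y$ with $R_Y(y) > t$ has $\tau_r(y) > t$, so
\[
\mu(\tau_r > t) \ge \mu(Y)(\mu_Y(R_Y > t) - \mu_Y(U_r)).
\]
Substituting $\mu_Y(R_Y \ge t) \ge C e^{-c t^\gamma}$ from (3) and dividing by $s\mu(U_r)^{1-\alpha}$ yields an upper bound of order $\mu(U_r)^{\alpha(1-\gamma) - 1}$, which tends to $0$ precisely when $\alpha > 1/(1-\gamma)$; for (4) with $\alpha > 1$ the same argument applies after selecting $\gamma \in (0, 1 - 1/\alpha)$, using the slower-than-stretched-exponential lower bound on the $R_Y$-tail.

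The main obstacle is the delicate tuning of $u_0$ and the uniform bookkeeping of $E_r$ across regimes, ensuring that both the large-deviation error $\mu_Y(A_{u_0})$ and the short-time correction (coming from bursts of long return times) remain subdominant to the principal exponent $\tilde L s \mu(U_r)^{1-\alpha}$. The thresholds $\alpha = 1/(1-\gamma)$ in (2)/(3) and $\alpha = 1$ in (4) are exactly where this balance breaks down, and the matching lower bound via the tail of $R_Y$ identifies $L_{\alpha, s}(z) = 0$ on the other side.
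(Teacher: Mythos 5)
Your high-level strategy matches the paper's: compare $\tau_r$ with the induced hitting time $\tau_{Y,r}$ on $A_u^c$, apply Theorem~\ref{thm:exp} to $(F,\Phi)$ on $Y$, tune the large-deviation threshold to make $\mu_Y(A_u)$ subdominant, and kill the super-critical regime via the tail of $R_Y$. The first stage — establishing \eqref{eq:master} with $L_{Y,\alpha,s}$ in place of $L_{\alpha,s}$ — is exactly Proposition~\ref{prop:ind}, and your bookkeeping for it (including the threshold $\alpha=1/(1-\gamma)$) is essentially right. The difficulties are all in the second stage, which your proposal substantially underestimates.

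\textbf{The Kac-comparison is not a one-liner.} You write that ``a standard Kac-formula computation gives an analogous comparison between $\mu(\tau_r>t)$ and $\mu(Y)\mu_Y(\tau_r>t)$,'' but Kac's formula yields only the one-sided inequality $\mu(\tau_r>t)\ge\mu(Y)\mu_Y(\tau_r>t)$ (equivalently $L_{\alpha,s}(z)\le L_{Y,\alpha,s}(z)$), which suffices for the degenerate parts of (3)--(4) but \emph{not} for (1)--(2) when $\alpha>1$, where one also needs $L_{\alpha,s}(z)\ge L_{Y,\alpha,s}(z)$. For this the paper unrolls the tower,
\[
\mu(\tau_r>t)\;\le\;\mu(Y)\sum_{i\ge0}\mu_Y(\tau_r>t+i),
\]
and must control the summands \emph{uniformly in both $i$ and $r$}. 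That requires a quantitative refinement of Theorem~\ref{thm:exp} (Lemma~\ref{lem:extend}, giving $\mu_Y(\tau_{Y,r}>n)=\Lambda_r^n\bigl[1+\mathcal{O}(\mu_Y(U_r)\log\mu_Y(U_r))\bigr]$ for all $n$ in the relevant range), followed by a non-trivial interleaving of the geometric factor $\Lambda_r^{i\mu(Y)}$ with the large-deviation tail, including an explicit integration-by-parts when the tail is stretched exponential. None of this is in your sketch, and it cannot be dismissed as ``standard.''

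\textbf{The $\alpha<1$ lower bound needs the exhaustion $\mu(Y)\to1$.} Your comparison, when worked out for $\alpha<1$, only yields $L_{\alpha,s}(z)\ge\mu(Y)\,L_{Y,\alpha,s}(z)$. To close the gap between $\mu(Y)$ and $1$, the paper runs the whole argument over the exhausting sequence $Y=\pi(\Delta^{(n)})$ of truncated Rokhlin towers, each of which still carries a first-return map satisfying (F1)--(F4). This device is absent from your proposal and is the reason the theorem is stated with a single $Y$ but holds for the full measure $\mu$.

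\textbf{The periodic upper bound for $\alpha<1$ requires a separate argument.} Remark~\ref{rem:alpha} gives $L_{\alpha,s}(z)\le1$ for $\alpha<1$, which matches the target only for non-periodic $z$. For $p$-periodic $z$, one must improve this to $\le 1-e^{S_p\vf(z)}<1$; the paper does so via the annulus $V_r'=U_r\setminus f^{-p}(U_r)$, the distortion identity \eqref{eq:ratio}, and a hitting-time comparison from \cite{FreFreTod15}. Your proposal never mentions this, and without it the periodic case of \eqref{eq:master} for $\alpha<1$ is unproved.

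On a minor positive note: your explicit separation of a ``short-time contribution'' from orbits with $\tau_{Y,r}<u_0$ is actually a sensible precaution, since the set inclusion $\{\tau_r>t\wedge A_u^c\}\subset G^+$ is delicate for small $F$-hitting times; the paper handles this implicitly. But this refinement does not compensate for the three missing pieces above, each of which carries genuine mathematical content.
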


\begin{remark}
One expects, as in the examples of Section~\ref{sec:app}, that the decay of $\mu_Y(R_Y\ge u)$ matches that of $\mu_Y(A_u)$, so (2) and (3) in this theorem can be seen as complementary cases.
\end{remark}

\begin{remark}
Theorem~\ref{thm:induce} excludes the case $\alpha=0$ since as already noted in
Remark~\ref{rem:alpha 0}, the limit holds in this case under general conditions which do not
require a spectral gap.  Thus it is not necessary to pass to the induced map $F$ in this
case; one simply needs to verify the conditions listed in Remark~\ref{rem:alpha 0}.
\end{remark}

%%%%%%%%%%%%%%%%%%%%%%%%%%%%%%%%

\subsection{Proof of Theorem~\ref{thm:induce}}
\label{induce proof}

We first prove the theorem for the quantity
$$
L_{Y,\alpha, s}(z) :=\lim_{r \to 0}  \frac{-1}{s \mu(U_r)^{1-\alpha}} \log \mu_Y(\tau_r > s\mu(U_r)^{-\alpha}).
$$

\begin{proposition}
Under the conditions of Theorem~\ref{thm:induce}, all parts of the theorem hold with $L_{Y,\alpha, s}(z)$ replacing $L_{\alpha, s}(z)$.  In particular, in cases (1) and (2),
\begin{equation}
L_{Y, \alpha, s}(z) = 
\begin{cases}
1, & \mbox{if $z$ is not periodic,} \\
1 - e^{S_p\vf(z)}, & \mbox{if $z$ is $p$-periodic for $f$}.
\end{cases}\label{eq:masterY}
\end{equation}
\label{prop:ind}
\end{proposition}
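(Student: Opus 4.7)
The plan is to transfer Theorem~\ref{thm:exp} applied to the induced system $(F,\Phi,\mu_Y)$ back to the original hitting time via the identity $\tau_r(y) = R_{Y,\tau_{Y,r}(y)}(y)$ for $y\in Y$ (which holds since $U_r\subset Y$), using the hypothesised large deviations on $R_{Y,n}$ to convert between $f$-time and $F$-time. An orbit-visit count shows that $z$ is $p$-periodic for $f$ iff $z$ is $q$-periodic for $F$ with $R_{Y,q}(z)=p$, and then $S_q^F\Phi(z)=S_p\vf(z)$, so the periodic constants in \eqref{eq:master} and \eqref{eq:masterY} agree. Setting $T=s\mu(U_r)^{-\alpha}$ and $K=\mu(Y)T$, the relation $\mu_Y(U_r)=\mu(U_r)/\mu(Y)$ combined with Theorem~\ref{thm:exp} applied to $F$ gives $-\log \mu_Y(\tau_{Y,r} > (1\pm\eps)K)\sim (1\pm\eps)L^F s\mu(U_r)^{1-\alpha}$, where $L^F=1$ or $L^F=1-e^{S_p\vf(z)}$ as appropriate.

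The sandwich uses any $u_0\le K/(1+\eps)$. Outside $A_{u_0}$, $R_{Y,n}\in(1\pm\eps)n/\mu(Y)$ for all $n\ge u_0$; this forces (a) any $y\in\{\tau_r>T\}\cap\{\tau_{Y,r}\ge u_0\}\setminus A_{u_0}$ to satisfy $\tau_{Y,r}(y)>K/(1+\eps)$, and (b) $\{\tau_r>T\}\cap\{\tau_{Y,r}<u_0\}\subseteq\{R_{Y,u_0}>T\}\subseteq A_{u_0}$ (since $R_{Y,u_0}\le(1+\eps)u_0/\mu(Y)\le T$ on $A_{u_0}^c$). Hence
\[
\mu_Y(\tau_{Y,r}>K/(1-\eps)) - \mu_Y(A_{u_0}) \;\le\; \mu_Y(\tau_r>T) \;\le\; \mu_Y(\tau_{Y,r}>K/(1+\eps)) + 2\mu_Y(A_{u_0}).
\]
Taking $u_0\asymp K\asymp\mu(U_r)^{-\alpha}$, I verify that $\mu_Y(A_{u_0})$ is negligible compared with the main term: in case (1), $\mu_Y(A_{u_0})\le e^{-cK}$ and $\mu(U_r)^{-\alpha}\gg\mu(U_r)^{1-\alpha}$ for every $\alpha>0$; in case (2), $\mu_Y(A_{u_0})\le e^{-c(\eps)K^\gamma}$ and $\mu(U_r)^{-\alpha\gamma}\gg\mu(U_r)^{1-\alpha}$ iff $\alpha(1-\gamma)<1$, pinning the threshold $\alpha<1/(1-\gamma)$; in case (4) with $\alpha\le 1$ the main term is bounded away from $0$ (it tends to $1$ for $\alpha<1$, to $e^{-L^F s}$ for $\alpha=1$), so superpolynomial decay of $\mu_Y(A_{u_0})$ trivially dominates. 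Letting $\eps\to 0$ after $r\to 0$ yields \eqref{eq:masterY}.

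For case (3) and the $\alpha>1$ portion of case (4) the sandwich breaks (the error no longer dominates the vanishing main term); instead I use $\mu_Y(\tau_r>T)\ge\mu_Y(\{R_Y\ge T+1\}\setminus U_r)$, since any $y\in Y\setminus U_r$ with $R_Y(y)\ge T+1$ has its next $T$ iterates outside $Y\supset U_r$. Because $z\in\Ycont$ lies in the interior of some level set $\{R_Y=k_0\}$, for $r$ small one has $U_r\cap\{R_Y\ge T+1\}=\emptyset$ and $\mu_Y(\tau_r>T)\ge\mu_Y(R_Y\ge T+1)$; the tail bound on $R_Y$ then gives $-\log\mu_Y(\tau_r>T)\lesssim T^\gamma=s^\gamma\mu(U_r)^{-\alpha\gamma}$, and dividing by $s\mu(U_r)^{1-\alpha}$ produces the factor $\mu(U_r)^{\alpha(1-\gamma)-1}\to 0$ precisely when $\alpha>1/(1-\gamma)$ (or, in case (4), for any $\alpha>1$ by letting $\gamma\uparrow 1$, since the tail decays slower than every stretched exponential). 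The main obstacle is the joint calibration of $u_0$ so that all three error terms in the sandwich are dominated by the (possibly vanishing) main term; the clean reduction $\{\tau_r>T,\tau_{Y,r}<u_0\}\subseteq A_{u_0}$ is the essential step that makes the exponent thresholds come out exactly as stated.
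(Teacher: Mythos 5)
Your argument is essentially the paper's own proof: the same sandwich of $\mu_Y(\tau_r>t)$ between induced hitting-time probabilities $\mu_Y(\tau_{Y,r}>u/(1\pm\ve\mu(Y)))$ plus or minus $\mu_Y(A_u)$, the same application of Theorem~\ref{thm:exp} to $(F,Y,\mu_Y)$ for the main term, the same comparison of the decay of $\mu_Y(A_u)$ against $e^{-s\mu(U_r)^{1-\alpha}}$ to extract the threshold $\alpha<1/(1-\gamma)$, and the same use of $\{\tau_r>t\}\supseteq\{R_Y\ge t\}$ for case (3) and the degenerate part of (4). Your handling of the event $\{\tau_r>T,\ \tau_{Y,r}<u_0\}$ via monotonicity of $j\mapsto R_{Y,j}$ is in fact slightly more careful than the paper's one-line assertion that $\tau_r(y)>t$ and $y\in A_u^c$ force $\tau_{Y,r}(y)>u/(1+\ve\mu(Y))$.

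The one genuine omission is the case $\alpha=\infty$, which the proposition does assert (part (1) of the theorem is stated for $\alpha\in(0,\infty]$, and the degenerate conclusion in (2)--(4) is also needed there). Your entire argument is parametrised by $T=s\mu(U_r)^{-\alpha}$ with $\alpha$ finite, i.e.\ the diagonal limit; for $\alpha=\infty$ one must instead take $t\to\infty$ at fixed $r$ and only then $r\to 0$. The sandwich still works, but the main term must now be identified as $\Lambda_r^{t\mu(Y)/(1\pm\ve\mu(Y))}$ via the spectral gap of the punctured induced operator (giving the escape rate $-\mu(Y)\log\Lambda_r$), one must choose $r$ small enough that $\Lambda_r^{\mu(Y)/(1+\ve\mu(Y))}>e^{-c(\ve)\mu(Y)}$ so that $\mu_Y(A_{t\mu(Y)})$ is negligible in the $t\to\infty$ limit, and finally one invokes \eqref{eq:exp formula} for the induced map as $r\to 0$; the degenerate cases follow because a stretched-exponential lower bound on $\mu_Y(R_Y\ge t)$ forces the escape rate itself to vanish. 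This is a separate (though parallel) computation that your write-up does not supply.
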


\begin{proof}
We will assume throughout that $\alpha\neq 1$ since this case is straightforward and proved elsewhere.
Fix some small $\eps > 0$, and assume that the hole $U_r$ 
is contained
inside one domain of $F$.  For notational simplicity, here we will assume that the centre $z$ of our $U_r$ is non-periodic, 
but the periodic case is then immediate. We remark only that if $z$ is periodic for $f$ with period $p$ and in the domain
of $F$, then $z$ is periodic for $F$ with period $q \le p$, and
$e^{S_q\Phi(z)} = e^{S_p\vf(z)}$.

If $y \in A_u^c$ and $t = u/\mu(Y)$, then  $\tau_r(y) > t$ implies that
$\tau_{Y,r}(y) > u/(1+\eps \mu(Y))$ and is implied by
$\tau_{Y,r}(y) > u/(1-\eps \mu(Y))$.
Since Theorem~\ref{thm:exp} applies to $(F, Y, \mu_Y)$, 
there exist values $\theta^+(v,r), \theta^-(v,r)$  
so that 
$$
\theta^-(v, r)e^{-v\mu_Y(U_r)^{1-\alpha}}\le \mu_Y(\tau_{Y, r}>v\mu_Y(U_r)^{-\alpha})\le \theta^+(v, r)e^{-v\mu_Y(U_r)^{1-\alpha}},
$$
where $\lim_{r\to 0}\frac{\log \theta^{\pm}(v, r)}{v\mu(U_r)^{1-\alpha}}=0$.

We compute for the path $t = s\mu(U_r)^{-\alpha}$, so
$$
u = s \mu(Y) \mu(U_r)^{-\alpha}= s \mu(Y)^{1-\alpha} \mu_Y(U_r)^{-\alpha}.
$$
We write
$\theta^-(r)=\theta^-\left(s(1-\eps\mu(Y))^{-1} \mu(Y)^{1-\alpha}, r\right)$ and 
$\theta^+(r) = \theta^+\left(s(1+\eps\mu(Y))^{-1} \mu(Y)^{1-\alpha}, r\right)$
to shorten notation.
Also we abbreviate
$$
G^\pm = \{ y \in Y :
\tau_{Y,r}(y) >  s(1\pm \eps \mu(Y))^{-1} \mu(Y)^{1-\alpha} \mu_Y(U_r)^{-\alpha}\}.
$$

First we bound $\mu_Y(\tau_r > t )$ from above, 
since $\{ \tau_r>t\wedge A_u^c\}\subset G^+$,
\begin{equation}
\label{eq:up tau}
\begin{split}
\mu_Y(\tau_r > t) &=  
 \mu_Y(\tau_r > t \wedge A_u^c) + \mu_Y(\tau_r > t \wedge A_u) \\
 &\le \mu_Y(G^+)+ \mu_Y(A_u)\\
 &\le \theta^+(r) e^{-s(1+\eps \mu(Y))^{-1}\mu(Y)^{1-\alpha}\mu_Y(U_r)^{1-\alpha}}
 +  \mu_Y(A_u)\\
 &=
  \theta^+(r) e^{-s(1+\eps \mu(Y))^{-1}\mu(U_r)^{1-\alpha}}
 +  \mu_Y(A_u).
\end{split}
\end{equation}

Similarly, we bound $\mu_Y(\tau_r > t)$ from below, using $G^-$:
\begin{align*}
\mu_Y(\tau_r > t \wedge A_u^c) 
&\ge\mu_Y(\tau_{Y,r} > u(1-\eps\mu(Y))^{-1} \wedge A_u^c) \\
&\ge  \mu_Y(\tau_{Y,r} >  s(1-\eps\mu(Y))^{-1} \mu(Y)\mu(U_r)^{-\alpha}) -\mu_Y(G^- \wedge A_u) \\
&\ge \theta^-(r)e^{- s(1-\eps\mu(Y))^{-1} \mu(U_r)^{1-\alpha}}-\mu_Y(G^- \wedge A_u),
\end{align*}
and therefore,
\begin{equation}
\label{eq:low tau}
\begin{split}
\mu_Y(\tau_r > t)  &= \mu_Y(\tau_r > t \wedge A_u^c) + \mu_Y(\tau_r > t \wedge A_u) \\
&\ge  \theta^-(r) e^{-s(1-\eps \mu(Y))^{-1} \mu(U_r)^{1-\alpha}}
+\mu_Y(\tau_r > t \wedge A_u) - \mu_Y(G^- \wedge A_u)\\
&\ge \theta^-(r) e^{-s(1-\eps \mu(Y))^{-1} \mu(U_r)^{1-\alpha}} - \mu_Y(A_u) .
\end{split}
\end{equation}

Now to find the limit in \eqref{eq:masterY}, we use first \eqref{eq:up tau}
to bound $L_{Y, \alpha, s}$ from below (taking a minus sign, so the inequality flips):

\begin{equation}
\begin{split}&\frac{-\log \mu_Y(\tau_r > t)}{s\mu(U_r)^{1-\alpha}}   \ge -\frac{\log\left(\theta^+(r) e^{-s(1+\eps \mu(Y))^{-1} \mu(U_r)^{1-\alpha}} + \mu_Y(A_u)\right)}{s\mu(U_r)^{1-\alpha}}\\
&\hspace{1cm}= -\frac{\log \theta^+(r)}{s\mu(U_r)^{1-\alpha}}+ \frac{1}{1+\eps \mu(Y)} -\frac{\log\left(1+\frac{e^{s(1+\eps \mu(Y))^{-1}\mu(U_r)^{1-\alpha}}}{\theta^+(r)}\mu_Y(A_u)\right)}{s\mu(U_r)^{1-\alpha}}.\end{split}\label{eq:master est}
\end{equation}

The first term converges to zero as $r\to0$ by assumption, so we focus on the final term.

\textbf{Case I: $\mathbf{\alpha\in (0,1)}$.}  
In this case since $\theta^+(r) = O(e^{s\mu(U_r)^{1-\alpha}+\delta})$ for any $\delta>0$, and $\mu(U_r)^{1-\alpha}\to 0$ as $r\to 0$, hence $\frac{e^{-s(1+\eps \mu(Y))^{-1}\mu(U_r)^{1-\alpha}}}{\theta^+(r)}=O(1)$ and we see that the final term of \eqref{eq:master est} is of order $ \frac{\mu_Y(A_u)}{\mu(U_r)^{1-\alpha}}$.  Assuming that $\mu_Y(A_u)\le Cu^{-\beta}$  for some $C, \beta>0$, 
we have,
$$\mu_Y(A_u)\le C(s \mu(Y) \mu(U_r)^{-\alpha})^{-\beta},$$ 
so that $L_{Y, \alpha,s}(z) \ge 1/(1 + \ve \mu(Y))$ if $\alpha-1+\alpha\beta>0$, i.e., 
$\alpha>\frac1{1+\beta}$.  So the lower bound for the non-degenerate part (i.e., the ``$\alpha \leq 1$'' part)
of (4) follows along with (2) and (1) for the $\alpha\in (0,1)$ case since $\eps$ was arbitrary.  
The upper bound follows immediately from Remark~\ref{rem:alpha}.

\textbf{Case II:  $\mathbf{\alpha\in (1,\infty)}$.}  Here we focus on the stretched exponential case since all remaining parts of this proposition then follow.  To complete the proof of (2), we again refer to \eqref{eq:master est}.  Suppose that there exist $C,c(\ve)>0$ and $\gamma\in (0,1)$ such that $\mu_Y(A_u)\le Ce^{-c(\ve)u^\gamma}$.  Then for \eqref{eq:masterY} to hold it is sufficient that the decay of 
$\mu_Y(A_u)$, which is $Ce^{-c(s\mu(U_r)^{-\alpha})^\gamma}$, is faster than 
$e^{-s(1+\eps \mu(Y))^{-1}\mu(U_r)^{1-\alpha}}$.  
So we require that $\alpha<\frac1{1-\gamma}$.  
The upper bound follows similarly, using \eqref{eq:low tau} in place of \eqref{eq:up tau}, completing (2).

To prove (3) and the degenerate (i.e., ``$\alpha > 1$'') part of (4), we assume that there exist $C,c>0$, $\gamma\in (0,1)$ such that $\mu_Y(R_Y \ge t) \ge C e^{-ct^\gamma}$.  Fix $\alpha>\frac1{1-\gamma}$.  
Then using the fact that $\{ \tau_r > t \} \supset \{ R_Y \ge t \}$, we estimate
\[
\begin{split}
\frac{- \log \mu_Y(\tau_r > t)}{s \mu(U_r)^{1-\alpha}}
& \le \frac{-\log \mu_Y(R_Y \ge t)}{s \mu(U_r)^{1-\alpha}} 
\; \le \; \frac{-\log (Ce^{-ct^\gamma})}{s\mu(U_r)^{1-\alpha}} \\
& \le \frac{- \log C}{s \mu(U_r)^{1-\alpha}} + \frac{cs^\gamma \mu(U_r)^{-\alpha \gamma}}{s \mu(U_r)^{1-\alpha}}
\end{split}
\]
and both terms tend to $0$ with $r$ since $\alpha > 1/(1-\gamma)$.  Note that this 
estimate easily extends from the measure $\mu_Y$ to the measure $\mu$,
so that $L_{\alpha,s}(z) = 0$ for all $z \in I$.

\textbf{Case III: $\mathbf{\alpha = \infty}$.}  For this case, we compute first the limit
$t\to \infty$ and then $r \to 0$ in the expression given by \eqref{eq: general lim}.  

Fix $\ve>0$ and define $A_u$ as before.  In analogy to the previous two cases, set
\[
G^{\pm} = \{ y \in Y : \tau_{Y,r}(y) > u/(1 \pm \ve \mu(Y)) \},
\]
where $u = t \mu(Y)$.  Notice then that
$\{ \tau_r > t \wedge A_u^c \} \subset G^+$ as before.  Thus as in \eqref{eq:up tau},
\[
\mu_Y(\tau_r > t) \le \mu_Y(G^+) + \mu_Y(A_u) .
\]
Following \eqref{eq:low tau}, we obtain,
\[
\mu_Y(\tau_r > t) \ge \mu_Y(G^-) - \mu_Y(A_u) .
\]

To prove the exponential case (1), assume that
there exists $c(\ve)>0$ such that $\mu_Y(A_u) \le C e^{-c(\ve) u}$.
Since $F$ satisfies the assumptions of Theorem~\ref{thm:exp}, we only consider $r$
so small such that all associated transfer operators $\hLp_r$ have a uniform
spectral gap.  Let $\Lambda_r$ denote the leading eigenvalue of $\hLp_r$ and
choose $r_1$ so small $\Lambda_r^{(1-\ve \mu(Y))^{-1}} > e^{-c(\ve)}$ for all $r < r_1$.
By Corollary~\ref{cor:uniform gap}, 
\[
C^{-1} \Lambda_r^{u(1 \pm \ve \mu(Y))^{-1}} \le \mu_Y(G^{\pm}) \le C \Lambda_r^{u(1\pm \ve \mu(Y))^{-1}}, 
\]
for some
$C >0$, independent of $t$, but possibly depending on $r$.

Thus on the one hand we derive a lower bound,
\begin{align*}
\lim_{t\to \infty} -\frac 1t \log \mu_Y(\tau_r > t) 
& \ge \lim_{t\to \infty} -\frac 1t \log \left( C \Lambda_r^{ t \mu(Y)/ (1+\ve \mu(Y))} + C e^{-c(\ve) t \mu(Y)} \right) \\
& = \frac{ - \mu(Y) \log \Lambda_r}{1 + \ve \mu(Y)} . 
\end{align*}
On the other hand, the analogous upper bound holds,
\begin{align*}
\lim_{t\to \infty} -\frac 1t \log \mu_Y(\tau_r > t) 
& \le \lim_{t\to \infty} -\frac 1t \log \left( C^{-1} \Lambda_r^{t \mu(Y)/(1-\ve \mu(Y))} - C e^{-c(\ve) t \mu(Y)} \right) \\
& = \frac{- \mu(Y) \log \Lambda_r}{1 - \ve\mu(Y)} . 
\end{align*}
Since $\ve>0$ was arbitrary, this yields 
\begin{equation}
\label{eq:Y esc}
\lim_{t \to \infty} - \frac 1t \log \mu_Y(\tau_r > t) = \lim_{t \to \infty} - \frac 1t \log \mu(\tau_r > t \wedge Y)
= - \mu(Y) \log \Lambda_r .
\end{equation}
Now using \eqref{eq:exp formula} applied to the induced map $F$, we conclude
\[
\lim_{r \to 0}  \lim_{t\to \infty} -\frac{1}{t \mu(U_r)} \log \mu_Y(\tau_r > t) 
= \lim_{r \to 0} \frac{-\log \Lambda_r}{\mu_Y(U_r)}  = 1
\]
in the generic case, and $1 - e^{S_p\vf(z)}$ in the periodic case.

For the remaining items (2)-(4) of the proposition, it suffices to show that $L_{Y, \alpha,s}(z) = 0$
when $\alpha = \infty$ under the assumption that $\mu_Y(R_Y \ge t) \ge C e^{-ct^\gamma}$
for some $\gamma \in (0,1)$.  This is a trivial estimate since in this case the escape rate
is $0$, i.e.,
\[
0 \le \lim_{t \to \infty} - \frac 1t \log \mu_Y(\tau_r > t) \le \lim_{t \to \infty} - \frac 1t \log \mu_Y(R_Y \ge t)
\le \lim_{t \to \infty} ct^{\gamma -1} = 0.
\]
It follows immediately that $L_{Y, \alpha, s}(z) = 0$ for all $z \in Y$. 
\end{proof}

\begin{proof}[Proof of Theorem~\ref{thm:induce}]
We will apply Proposition~\ref{prop:ind} to convert the results for $L_{Y, \alpha, s}$ to $L_{\alpha, s}$.  
For this, we turn to an extended system implied by the existence
of the first return map $F$.  We will refer to this as a Rokhlin tower (our map $F$ defines
what is nearly a Young tower, see \cite{young poly}, except that we do not require that 
$F$ have a Markov structure).  Define
\[
\Delta = \{ (y, n) \in Y \times \N : n < R_Y(y) \} . 
\]
The $\ell$th level of the tower is $\Delta_\ell = \{ (y,n) \in \Delta : n = \ell \}$ and the dynamics
is defined by $f_\Delta(y, n) = (y, n+1)$ if $n < R_Y(y) -1$ and $f_\Delta(y, R_Y(y)-1) = (F(y), 0)$.
The first return map to the base of the tower $\Delta_0 = Y$ is again $F = f^{R_Y}$.

The assumptions on $F$ imply that $\mu_Y$ is an invariant probability measure on
$Y = \Delta_0$, which induces an $f_\Delta$-invariant probability measure $\mu_\Delta$ on
$\Delta$:  Define $\mu_\Delta|_{\Delta_\ell} = c (f_\Delta)_*^\ell \mu_Y|_{f_\Delta^{-\ell}(\Delta_\ell)}$,
where $c = \mu_Y(R_Y)^{-1}$ is the normalising constant.  Letting $\pi : \Delta \to I$ denote the
natural projection, $\pi(x, \ell) = f^\ell(x)$, 
we have $\pi \circ f_\Delta = f \circ \pi$ and $\pi_* \mu_\Delta = \mu$.
 
Letting $\Delta^{(n)} = \cup_{\ell=0}^n \Delta_\ell$ denote the $n$ first levels of the tower, 
we observe that this gives us a sequence of induced maps $F_n:\Delta^{(n)}\to \Delta^{(n)}$ 
each satisfying the conditions of 
Theorem~\ref{thm:exp}.  (In fact, using the assumption on $F$, the potential for $(F_n)^n$ is contracting.)
The projection $\pi(\Delta^{(n)})$ gives a sequence of sets exhausting the space: $\mu(\pi(\Delta^{(n)}))\to 1$ as $n\to \infty$.  
We will carry out the proof for $\pi(\Delta^{(n)})$ in place of $Y$, but calling it $Y$ again and suppressing the index $n$. 

{\bf Case I: $\mathbf{\alpha \in (0,1)}$.}
We will use the facts 
\begin{enumerate}
\item[(a)] $\mu(\tau_r\le t\wedge Y)\le \mu(\tau_r\le t)$
\item[(b)] given $\gamma\in \R$, for $x$ small, $\log(1+\gamma x) \sim \gamma\log(1+x)$
\end{enumerate}
Then for $\alpha \in (0,1)$, $t=s\mu(U_r)^{-\alpha}$,
\begin{align*}-\frac{\log\mu_Y(\tau_r>t)}{s\mu(U_r)^{1-\alpha}}&= -\frac{\log(1-\mu_Y(\tau_r\le t))}{s\mu(U_r)^{1-\alpha}}= -\frac{\log\left(1-\mu(Y)^{-1}\mu(\tau_r\le t\wedge Y)\right)}{s\mu(U_r)^{1-\alpha}}\\
&\le -\frac{\log\left(1-\mu(Y)^{-1}\mu(\tau_r\le t)\right)}{s\mu(U_r)^{1-\alpha}}\sim -\frac1{\mu(Y)}\frac{\log\left(1-\mu(\tau_r\le t)\right)}{s\mu(U_r)^{1-\alpha}}\\
&=  -\frac1{\mu(Y)}\frac{\log\mu(\tau_r>t)}{s\mu(U_r)^{1-\alpha}}.
\end{align*}
Here we used (a) in the `$\le$ step' and (b) in the `$\sim$ step'.
So choosing $Y = \pi(\Delta^{(n)})$ exhausting our phase space, we deduce
\begin{equation}
\liminf_{r\to 0}\frac{-\log\mu(\tau_r>t)}{s\mu(U_r)^{1-\alpha}}\ge 
\begin{cases}
1, & \mbox{if $z$ is not periodic,} \\
1 - e^{S_p\vf(z)}, & \mbox{if $z$ is $p$-periodic for $f$}.
\end{cases}\label{eq:master 2}
\end{equation}
For non-periodic $z$, Remark~\ref{rem:alpha} gives the upper bound as 1 too, so $L_{\alpha, s}(z)=1$. 

For the periodic case we adapt Remark~\ref{rem:alpha} and use a result of \cite{FreFreTod15}.  
First recall $V_r:=U_r\cap f^{-p}(U_r)$ from the proof of Lemma~\ref{lem:per} and let $V'_r:=U_r\sm f^{-p}(U_r)$. 
For all small $r$, this will be a topological annulus around $z$.  By conditions \textbf{(P)} and (F1) (see \eqref{eq:translate}), 
\begin{equation}
\lim_{r\to 0}\frac{\mu(V'_r)}{\mu(U_r)}=\lim_{r\to 0}\frac{m_\vf(V'_r)}{m_\vf(U_r)}= 1-e^{S_p\vf(z)}.\label{eq:ratio}
\end{equation}
 We set $\tau_r':=\inf\{n\ge 1:f^n(x)\in V'_r\}$.  Now \cite[Proposition 2.7]{FreFreTod15} (with $B=U_r$ and $A=V'_r$) implies that
$$\mu(\tau_r'>n)-\mu(\tau_r>n)\le \sum_{j=1}^{p}\mu(\tau_r'>n \wedge f^{-n+j}(V_r))\le p\mu(V_r)< p\mu(U_r)$$
for all large $n$.  So we now proceed as in Remark~\ref{rem:alpha}:
\begin{align*}
0 &\le \frac{-\log \mu(\tau_r > t)}{s \mu(U_r)^{1-\alpha}} <
 \frac{-\log \left(\mu(\tau_r' > t)-p\mu(U_r)\right)}{s \mu(U_r)^{1-\alpha}} \\
 &=
\frac{-\log\left(1-\mu(\tau_r' \le t)-p\mu(U_r)\right)}{s \mu(U_r)^{1-\alpha}}=
\frac{-\log\left(1-\mu\left(\cup_{j=0}^{t-1} f^{-j}(V'_r)\right)-p\mu(U_r)\right)}{s \mu(U_r)^{1-\alpha}}\\
&
\le \frac{-\log\left(1-t \mu(V'_r)-p\mu(U_r)\right)}{s \mu(U_r)^{1-\alpha}}
 = \frac{-\log\left( 1-s\mu(V'_r) \mu(U_r)^{-\alpha} -p\mu(U_r)\right)}{s \mu(U_r)^{1-\alpha}}.
 \end{align*}
So by \eqref{eq:ratio}, the upper bound above converges to $1-e^{S_p\vf(z)}$ 
 as $\mu(U_r) \to 0$, so we conclude that $L_{\alpha, s}(z)=1 - e^{S_p\vf(z)}$.

{\bf Case II: $\mathbf{\alpha \in (1, \infty)}$.}

For $\alpha>1$, we obtain the following upper bound:
\begin{align*}
-\frac{\log\mu_Y(\tau_r>t)}{s\mu(U_r)^{1-\alpha}}& = \frac{\log\mu(Y)}{s\mu(U_r)^{1-\alpha}}-\frac{\log\mu(\tau_r>t\wedge Y)}{s\mu(U_r)^{1-\alpha}}\\
&\sim -\frac{\log\mu(\tau_r>t\wedge Y)}{s\mu(U_r)^{1-\alpha}} \ge -\frac{\log\mu(\tau_r>t)}{s\mu(U_r)^{1-\alpha}}.
\end{align*}
So we conclude that $L_{\alpha, s}(z) \le L_{Y,\alpha, s}(z)$.  
Note the above shows $L_{Y,\alpha, s}(z)=0$ implies $L_{\alpha, s}(z)=0$
so that items (3) and (4) of the theorem hold for $\alpha >1$.

To prove items (1) and (2) of the theorem, we also need a lower bound on $L_{\alpha,s}(z)$.
For this, recall that the measure $\mu$ can be expressed in terms of $\mu_Y$ by,
\[
\mu(A) = \frac{1}{\int R_Y \, d\mu_Y} \sum_{k=0}^\infty \sum_{i = 0}^k \mu_Y(f^{-i}(A) \cap Y_k),
\]
where $Y_k = \{ R_Y = k \}$, and $A$ is any measurable set.  Applying this expression
to $A = \{ \tau_r > t \}$, we note that $f^{-i}(\tau_r > t) \cap Y_k = \{ \tau_r > t+i \} \cap Y_k$
since $U_r \subset Y$.  Then reversing order of summation, we obtain,
\begin{equation}
\label{eq:Y sum}
\mu(\tau_r > t) = \frac{1}{\int R_Y \, d\mu_Y} \sum_{i = 0}^\infty \sum_{k=i}^\infty \mu_Y(\tau_r > t+i \wedge Y_k) 
\le \mu(Y) \sum_{i=0}^\infty \mu_Y(\tau_r > t+i) .
\end{equation}

To proceed, we prove a slight extension of our estimates in Section~\ref{ssec:deriv}.
Let $\hLp_r$ denote the punctured transfer operator for $F$ with potential $\Phi = S_{R_Y} \vf$
and hole $U_r$ as defined in \eqref{eq:open transfer}.  By
assumption on $F$ and Corollary~\ref{cor:uniform gap}, $\hLp_r = \Lambda_r \Pi_r + R_r$ 
has a uniform spectral gap, i.e., there exists $\beta>0$ such that the spectral radius of
$\Lambda_r^{-1} R_r$ is less than $e^{-\beta}$ for all $r$ sufficiently small.

\begin{lemma}
\label{lem:extend}
For all $r >0$ sufficiently small and
any $n \in \N$ such that $e^{-\beta n} < \mu_Y(U_r) \log \mu_Y(U_r)$, we have
\[
\mu_Y(\tau_{Y,r} > n) = \Lambda_r^n[1 + \mathcal{O}(\mu_Y(U_r)\log \mu_Y(U_r))] .
\]
\end{lemma}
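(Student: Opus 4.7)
The plan is to reduce the lemma to the spectral machinery already used in Section~\ref{ssec:deriv}, applied to the induced system $(F,Y)$. The key observation is that $\mu_Y(\tau_{Y,r}>n)$ can be written as an integral of $\hLp_r^n g_Y$ (where $g_Y$ is the $F$-invariant density) against the conformal measure $m_Y$, and then the spectral decomposition $\hLp_r=\Lambda_r\Pi_r+R_r$ from Corollary~\ref{cor:uniform gap} reduces everything to bounds already established.

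First I would identify the relevant survivor set. Since $U_r\subset Y$, we have $\{\tau_{Y,r}>n\}=F^{-1}(\I_r^{n-1})$ where $\I_r^{n-1}=\bigcap_{i=0}^{n-1}F^{-i}(Y\setminus U_r)$. Using $F$-invariance of $\mu_Y$ and then the conformality identity \eqref{eq:cov} applied to the induced system, one obtains
\[
\mu_Y(\tau_{Y,r}>n)=\mu_Y(\I_r^{n-1})=\int_{\I_r^{n-1}}g_Y\,dm_Y=\int_I\hLp_r^n g_Y\,dm_Y.
\]
This step is the only one that requires care: Theorem~\ref{thm:exp}'s hypotheses must hold for $(F,\Phi)$ so that the conformality identity and the transfer-operator setup genuinely apply to the induced map. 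The assumption of the lemma guarantees precisely that.

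Next I would substitute $\hLp_r^n g_Y=\Lambda_r^n\Pi_r g_Y+R_r^n g_Y$ and, writing $\Pi_r g_Y=c_r g_r$ with the normalisation $\int g_r\,dm_Y=1$, compute
\[
\mu_Y(\tau_{Y,r}>n)=\Lambda_r^n c_r+\int R_r^n g_Y\,dm_Y.
\]
The two error estimates are already in the paper: the Keller--Liverani perturbation bound \cite[Corollary 1]{KL pert}, as used between \eqref{eq:error split} and the end of Section~\ref{ssec:deriv}, gives $|c_r-1|=\mathcal{O}(-\mu_Y(U_r)\log\mu_Y(U_r))$; and the uniform spectral gap provides $\beta>0$ with $\|\Lambda_r^{-n}R_r^n\|_{\B}\le Ce^{-\beta n}$ for all sufficiently small $r$, so $|\int R_r^n g_Y\,dm_Y|\le C\|g_Y\|\,\Lambda_r^n e^{-\beta n}$.

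Factoring out $\Lambda_r^n$ yields
\[
\mu_Y(\tau_{Y,r}>n)=\Lambda_r^n\bigl[1+\mathcal{O}(-\mu_Y(U_r)\log\mu_Y(U_r))+\mathcal{O}(e^{-\beta n})\bigr],
\]
and the hypothesis that $e^{-\beta n}$ is dominated by $|\mu_Y(U_r)\log\mu_Y(U_r)|$ (the displayed inequality in the lemma should be read with this absolute value, since $\log\mu_Y(U_r)<0$) merges the two error terms into the single $\mathcal{O}(\mu_Y(U_r)\log\mu_Y(U_r))$ bound in the statement. The main (mild) obstacle is the bookkeeping to pass between $\tau_{Y,r}$ and $e_{Y,r}$ via $F$-invariance so that the conformality identity can be invoked cleanly; everything else is a direct application of the spectral picture already in place.
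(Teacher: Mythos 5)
Your argument is correct and is essentially the paper's own proof: the paper likewise writes $\mu_Y(\tau_{Y,r}>n)=\Lambda_r^n\bigl[1+\int_Y \Lambda_r^{-n}\hLp_r^n(g_0-g_r)\,dm\bigr]$ via the conformality identity, splits the error exactly as in \eqref{eq:error split} into the $(c_r-1)$ term controlled by the Keller--Liverani perturbation bound and the $R_r^n$ term controlled by the uniform spectral gap, and absorbs $e^{-\beta n}$ using the hypothesis on $n$. Your remark that the hypothesis should be read as $e^{-\beta n} < -\mu_Y(U_r)\log\mu_Y(U_r)$ (i.e.\ with the sign/absolute value corrected) is also consistent with how the paper uses it.
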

\begin{proof}
Noting that \eqref{eq:alpha split} is valid for all iterates of $F$, we write
\[
\mu(\tau_{Y,r} > n) = \Lambda_r^n \left[ 1 + \int_Y \Lambda_r^{-n} \hLp_r^n (g_0 - g_r) \, dm\right],
\]
where $g_0$ and $g_r$ are the normalised eigenfunctions for $\Lp_0$ and $\hLp_r$, 
respectively.  Following \eqref{eq:error split}, we note that the error term above can be split
into two terms, one bounded by $Ce^{-\beta n}$ and the other by 
$-C\mu_Y(U_r) \log \mu_Y(U_r)$.  By assumption on $n$, the error is of order $\mu_Y(U_r) \log \mu_Y(U_r)$.
\end{proof}

Now fix $\ve >0$ and define $A_u = A_{Y,u,\ve}$ as before.  Recall that if $\tau_r(y) > n$
and $y \in A_{n\mu(Y)}^c$, then $\tau_{Y,r} > n\mu(Y)/(1+\ve \mu(Y))$.  
We assume that there exist $C, c(\ve), \gamma>0$ such that 
$\mu_Y(A_u) \le C e^{-c(\ve) u^\gamma}$, and require that $\alpha < \frac{1}{1-\gamma}$. 

For the
sake of brevity, set $\vartheta = L_{Y,\alpha,s}(z)$, and by \eqref{eq:exp formula}, we
may choose $r$ sufficiently small so that $\Lambda_r \le e^{-(1-\ve)\mu_Y(U_r) \vartheta}$.
Setting $n= t+i$,
$\rho_r = \mu_Y(U_r)\log \mu_Y(U_r)$,
and using Lemma~\ref{lem:extend}, we estimate each term in \eqref{eq:Y sum} by
\[
\begin{split}
\mu_Y(\tau_r > t+i) & \le \mu_Y\big(\tau_r > t+i \wedge A_{(t+i)\mu(Y)}^c\big) + \mu_Y(A_{(t+i)\mu(Y)}) \\
& \le \mu_Y\big(\tau_{Y,r} > (t+i)\mu(Y)/(1+\ve\mu(Y))\big) +\mu_Y(A_{(t+i)\mu(Y)}) \\
& \le \Lambda_r^{(t+i)\mu(Y)/(1+\ve\mu(Y))}[1 + \mathcal{O}(\rho_r)] + \mu_Y(A_{(t+i)\mu(Y)}) \\
& \le e^{-(1-\ve)\mu(U_r) \vartheta (t+i) / (1+ \ve \mu(Y))} [1 + \mathcal{O}(\rho_r)] + 
Ce^{-c(\ve)(t+i)^\gamma \mu(Y)^\gamma} .
\end{split}
\]
To estimate \eqref{eq:Y sum}, we must sum both terms above over $i$.  Recalling that
$t = s\mu(U_r)^{-\alpha}$, the sum over the first term is bounded by,
\[
\begin{split}
\sum_{i \ge 0} e^{-(1-\ve)\mu(U_r) \vartheta (t+i) / (1+ \ve \mu(Y))} [1 + \mathcal{O}(\rho_r)]
& = \frac{ [1+ \mathcal{O}(\rho_r)] e^{-(1-\ve)\vartheta s \mu(U_r)^{1-\alpha}/(1+ \ve \mu(Y))}}
{1- e^{-(1-\ve) \vartheta \mu(U_r)/(1+\ve \mu(Y))}} \\
& \le \frac{ 2 e^{-(1-\ve)\vartheta s \mu(U_r)^{1-\alpha}/(1+ \ve \mu(Y))}}
{ (1- \ve) \vartheta \mu(U_r)},
\end{split}
\]
for $\ve$ and $r$ sufficiently small.
The sum over the second term is (recalling that $c=c(\eps)$),
\[
\sum_{i \ge 0} Ce^{-c(t+i)^\gamma \mu(Y)^\gamma}
 \le C \int_0^\infty  e^{- c (t+x)^\gamma \mu(Y)^\gamma} \, dx
= \frac{C}{ c^{1/\gamma} \mu(Y) \gamma} \int_{ct^\gamma \mu(Y)^\gamma}^\infty e^{-y} y^{\frac{1}{\gamma} -1} \, dy,
\]
where we have changed variables, $y = c(t+x)^\gamma \mu(Y)^\gamma$.  Setting
$n = \lceil \frac{1}{\gamma} -1 \rceil$, we have $y^{\frac{1}{\gamma} -1} \le y^n$, so making
this substitution and integrating by parts $n$ times, yields
\[
\sum_{i \ge 0} Ce^{-c(t+i)^\gamma \mu(Y)^\gamma}
 \le \frac{C e^{-ct^\gamma \mu(Y)^\gamma}}{ c^{1/\gamma} \mu(Y) \gamma}
\sum_{k=0}^n \frac{n!}{k!} (ct^\gamma \mu(Y)^\gamma)^{n-k} 
\le \frac{e C n!\, t \,e^{-ct^\gamma \mu(Y)^\gamma} }{\gamma c^{\frac{1}{\gamma} -n} \mu(Y)^{1-\gamma n}}  .
\]

Putting these estimates together with \eqref{eq:Y sum}, we have,
\[
\mu(\tau_r > t) \le \mu(Y) \left[\frac{ 2 e^{-(1-\ve)\vartheta s \mu(U_r)^{1-\alpha}/(1+ \ve \mu(Y))}}
{ (1- \ve) \vartheta \mu(U_r)} + C' t e^{-c (s \mu(Y))^\gamma \mu(U_r)^{-\alpha \gamma}}\right] ,
\]
so that 
\[
- \log \mu(\tau_r > t) \ge \log \left(\frac{(1-\ve)\vartheta \mu(U_r)}{2 \mu(Y)}\right)
+ \frac{(1-\ve)\vartheta s \mu(U_r)^{1-\alpha}}{1+\ve\mu(Y)}
- \log [1 + B_r],
\]
where 
\[
B_r = \frac{C' s\mu(U_r)^{1-\alpha} (1-\ve) \vartheta}{2} e^{-c(s\mu(Y))^\gamma \mu(U_r)^{-\alpha \gamma} + (1-\ve)\vartheta s\mu(U_r)^{1-\alpha}/(1+\ve \mu(Y))}  .
\]
Note that $B_r \to 0$ as $r \to 0$ since $\alpha < \frac{1}{1-\gamma}$.  Thus dividing by 
$s \mu(U_r)^{1-\alpha}$ and recalling that $\alpha >1$, we have,
\[
\lim_{r \to 0} \frac{-\log \mu(\tau_r>t)}{s\mu(U_r)^{1-\alpha}}
\ge \vartheta \frac{1-\ve}{1+\ve\mu(Y)},
\]
which is the required lower bound since $\ve >0$ was arbitrary.  Thus 
$L_{\alpha,s}(z) = L_{Y, \alpha,s}(z)$ and items (1) and (2) of the theorem are proved
for this case.

{\bf Case III: $\mathbf{\alpha = \infty}$.}
First we note that an upper bound similar to the one derived in Case II holds:
\begin{equation}
\label{eq:upper Y}
\begin{split}
\lim_{t \to \infty} - \frac 1t \log \mu_Y(\tau_r > t)
& = \lim_{t \to \infty} \frac 1t \log \mu(Y) - \frac 1t \log \mu(\tau_r > t \wedge Y) \\
& \ge \lim_{t \to \infty} - \frac 1t \log \mu(\tau_r > t) .
\end{split}
\end{equation}

To prove items (2)-(4) of the Theorem, we must show $L_{\infty,s}(z) = 0$ under the assumption
$\mu_Y(R_Y \ge t) \ge C e^{ct^\gamma}$ for some $\gamma \in (0,1)$.  This 
follows from Case III in the proof of Proposition~\ref{prop:ind} since then $L_{Y, \infty,s}(z) = 0$.
Due to the upper bound above, $L_{\infty, s}(z) = 0$ as well.

To prove item (1) of the theorem, fix $\ve>0$ and assume there exists $c(\ve)>0$ such that
$\mu_Y(A_u) \le C e^{-c(\ve) u}.$  As in Case II, 
we take $r_0$ so small that the transfer operators
$\hLp_r$ associated with the induced map $F$ have a uniform spectral gap for all $r \in [0,r_0]$
and denote
their leading eigenvalues by $\Lambda_r$.  Using \eqref{eq:exp formula}, we choose $r_1 < r_0$ 
so small that
$e^{-(1-\ve)\mu_Y(U_r)\vartheta} \ge \Lambda_r \ge e^{-c(\ve)(1+\ve\mu(Y))/2}$ for all $r < r_1$.  

By \eqref{eq:Y esc} in the proof of Proposition~\ref{prop:ind}(1), and \eqref{eq:upper Y}, we have
\[
 \lim_{t \to \infty} - \frac 1t \log \mu(\tau_r > t) \le - \mu(Y) \log \Lambda_r 
 \]
To prove the corresponding lower bound, we follow \eqref{eq:Y sum} and the estimates
in Case II of the present proof (with $\gamma=1$).  In particular, using Lemma~\ref{lem:extend},
\[
\begin{split}
\mu_Y(\tau_r > t+i) & \le \mu_Y\big( \tau_r > t+i \wedge A_{(t+i)\mu(Y)}^c \big) 
+ \mu_Y\big( A_{(t+i)\mu(Y)}\big) \\
& \le \Lambda_r^{(t+i)\mu(Y)/(1+\ve \mu(Y))}[1 + \mathcal{O}(\rho_r)] + C e^{-c(\ve)(t+i)\mu(Y)} .
\end{split}
\]
Summing over $i$, we obtain
\[
\mu(\tau_r>t) \le \mu(Y) \left[ \frac{2 \Lambda_r^{t \mu(Y)/(1+\ve \mu(Y)} }{(1-\ve)\vartheta \mu(U_r)}
+ C e^{-c \mu(Y) t} \right] .
\]
And finally, 
\[
- \log \mu(\tau_r > t) \ge \log \frac{(1-\ve)\vartheta \mu(U_r)}{2 \mu(Y)}
- \frac{t \mu(Y) \log \Lambda_r}{1 + \ve \mu(Y)} - \log [ 1 + B_r],
\]
where $B_r \le \frac{C (1-\ve)\vartheta \mu(U_r)}{2} e^{-c \mu(Y) t/2}$, by choice of $r_1$.
Now dividing by $t$ and taking $t \to \infty$ yields
\[
\lim_{t \to \infty} - \frac 1t \log \mu(\tau_r > t) \ge - \frac{\mu(Y) \log\Lambda_r}{1+\ve \mu(Y)}.
\]
Since $\ve>0$ was arbitrary, our upper and lower bounds match.  Thus using again
\eqref{eq:exp formula}, we have
\[
\lim_{r \to 0} \lim_{t \to \infty} \frac{- \log \mu(\tau_r > t)}{t \mu(U_r)}
= \lim_{r \to 0} \frac{- \mu(Y) \log \Lambda_r}{\mu(U_r)} 
= \lim_{r \to 0} \frac{- \log \Lambda_r}{\mu_Y(U_r)} = 1
\]
in the generic case, and $1-e^{S_p\vf(z)}$ in the periodic case.  Thus $L_{\infty, s}(z) = L_{Y, \infty, s}(z)$
as required.
\end{proof}

%%%%%%%%%%%%%%%%%%%%%%%%%%

\subsection{Remarks about the Polynomial Case}

Theorem~\ref{thm:induce} gives optimal results when the induced system has deviations
that are superpolynomial and when the decay rate of $A_u$ matches that of $\{ R_Y \ge u \}$.  
However, it gives only partial results if the induced system
has only polynomial deviations, i.e., $\mu_Y(A_u) \approx u^{-\beta}$ and $\mu_Y(R_Y \ge u) \ge u^{-\beta -1}$.  
In particular,
the proofs of Proposition~\ref{prop:ind}(4) and Theorem~\ref{thm:induce}(4) 
yields in the generic case,
\[
L_{\alpha, s}(z) = L_{Y, \alpha, s}(z)= 0 \; \; \; \mbox{if $\alpha > 1, \;$ and} \;  
L_{\alpha, s}(z) = L_{Y, \alpha, s}(z) = 1 \; \; 
\mbox{if $\frac{1}{1+\beta} < \alpha \le 1$}.
\]
It might appear that by improving our upper and lower bounds in \eqref{eq:up tau}
and \eqref{eq:low tau}, we might extend our results to the case $\alpha \le 1/(1+\beta)$,
but a closer look reveals there is a 
real obstruction to using inducing arguments to evaluate the required limits in
the polynomial case.  In particular, there is a nontrivial dependence between
the sets $\{ \tau_r > t \}$ and $A_u$ which makes the polynomial case
particularly delicate from this point of view.

To illustrate this point, consider the class of Manneville-Pomeau or LSV maps
on the unit interval, defined by
\[
f(x) = \begin{cases}
x + 2^\gamma x^{1+\gamma}, & \mbox{for } x \in [0,1/2), \\
2x-1, & \mbox{for } x \in [1/2,1].
\end{cases}
\]
When $\gamma \in (0,1)$, these maps preserve an invariant
probability measure $\mu$, absolutely continuous with respect to Lebesgue,
with density $g \approx x^{-\gamma}$ for $x$ near $0$ \cite{young poly, LSV2}.

Set $Y = [1/2,1]$ and let $( U_r )_{r \in (0,r_0]} \subset (1/2+\delta, 1)$, for some $\delta >0$. 
For $k \ge 0$, let $a_k = f_L^{-k}(1/2)$, where $f_L$ is the left branch of $f$.
Set $J_0 = Y$ and $J_k =  [a_k, a_{k-1})$ for $k \ge 1$.
Note that $\tau_Y = k+1$ on $f_R^{-1}(J_k)$, where $f_R$ is the right branch of $f$.

We claim $\{ \tau_r > t \wedge A_u \} \supseteq \bigcup_{k \ge t} f_R^{-1}(J_k)$.
Note that $\bigcup_{k \ge t} f_R^{-1}(J_k) = \{ R_Y > t\}$, and that
$\{ \tau_r > t \} \supset \{ R_Y > t \}$ since $U_r \subset Y$.  Moreover,
if $R_Y(x) > t$, then $\tau_{Y,u}(x) > u-1 + t$, and for $u = \mu(Y) t$, we have
\[
\frac 1u \tau_{Y,u}(x) > 1 - \frac 1u + \frac{1}{\mu(Y)}
\; \; \implies \; \; 
\frac 1u \tau_{Y,u}(x) - \frac{1}{\mu(Y)}  > 1 - \frac 1u,
\] 
so that $x \in A_u$ for all $u \ge 2$ and $\ve < 1/2$.  Thus $\{ R_Y > t \} \subset A_u$,
completing the proof of the claim.

Using well-known
estimates \cite{LSV2} on the spacing of $a_k$,  $a_k \approx k^{-1/\gamma}$,
\begin{equation}
\label{eq:bad}
\mu_Y(\tau_r > t \wedge A_u) \ge c t^{-1/\gamma} = c s^{-1/\gamma}  \mu(U_r)^{\alpha/\gamma},
\end{equation}
for some uniform constant $c>0$, where as usual we have set $t = s\mu(U_r)^{-\alpha}$.  
Using this lower bound, we may split up the relevant
expression in the limit defining $L_{Y,\alpha,s}$ as follows, 
\begin{equation}
\label{eq:split}
\frac{-\log \mu_Y(\tau_r > t)}{s\mu(U_r)^{1-\alpha}}
= \frac{- \log \mu_Y(\tau_r > t \wedge A_u^c)}{s\mu(U_r)^{1-\alpha}}
-\frac{\log[1 + \frac{\mu_Y(\tau_r > t \wedge A_u)}{\mu_Y(\tau_r > t \wedge A_u^c)}]}{s\mu(U_r)^{1-\alpha}}.
\end{equation}
To use the results for the induced map, one would expect that the first term above tends to the
desired limit, while the second term above acts as an error term and tends to $0$ as $r \to 0$.
However, using \eqref{eq:bad}, we see that the `error' term is bounded below by
\[
\frac{\mu_Y(\tau_r > t \wedge A_u)}{s\mu(U_r)^{1-\alpha}}
\ge c' \mu(U_r)^{-1+\alpha + \alpha/\gamma} \xrightarrow[r \to 0]{} \infty,
\]
whenever $\alpha < \gamma/(1+\gamma)$.

By Remark~\ref{rem:alpha}, we known all limit points of $L_{Y,\alpha,s}$ lie in $[0,1]$,
so in the range $\alpha < \gamma/(1+\gamma)$, the limit relies on cancellation between 
two diverging terms in \eqref{eq:split}.  This implies that what we would like to consider
to be an error term does not function as one for small $\alpha$.

%%%%%%%%%%%%%%%%%%%%%%%%%%

\section{Applications of inducing}\label{sec:app}
Theorem~\ref{thm:induce} applies whenever we have a system $(I, f, \mu)$ with an inducing scheme $(X, F, \nu)$ where $F=f^\tau$ and $\tau$ is the first return time to $X$ where, moreover, $\nu(A_u)$ is known to satisfy a suitable large deviations principle.  
At present such large deviations principles are known in quite specific cases.   We mention several examples here.

\subsection{Generalised Farey maps}
In the i.i.d.~case it has been shown that the large deviation rate of an unbounded observable $\psi$ matches the tail of the observable.  
For example for $\bar\psi=\int\psi~d\nu$, $\gamma\in (0,1)$ and $c>0$,
$$\nu(\psi>n)\le ce^{-n^\gamma} \Longrightarrow \lim_{n\to \infty}\frac1{n^\gamma}\log\nu(S_n\psi>\eps+\bar\psi)=-\eps^\gamma,$$
where $S_n\psi$ is the $n$-th ergodic sum of these observables, see \cite{GanRamRem14}.  Similarly, if the tail of an observable is polynomial of order $\beta$, then the deviations are polynomial of order $\beta-1$; and for exponential, the orders match exactly \cite{Gantert}.

An application of Theorem~\ref{thm:induce} is to generalised Farey maps as in \cite{KesMunStr12}.  Here one chooses a countable partition $\{A_n\}_n$ of $(0,1]$ by left-open, right-closed intervals labelled in decreasing order in the interval with length of $A_n$ equal to $a_n$ for each $n$.  Then for $t_n :=  \sum_{k=n}^\infty a_k$ and $x\in [0,1]$,
\begin{equation*}
f(x)=\begin{cases} (1-x)/a_1 & \text{ if } x\in A_1,\\
a_{n-1}(x-t_{n+1})/a_n+t_n & \text{ if } x\in A_n, n\ge 2,\\
0 & \text{ if } x=0.
\end{cases}
\end{equation*}
Lebesgue measure is invariant for this map and taking a first return map to the interval $A_1$ gives us an induced map satisfying the conditions of 
Theorem~\ref{prop:LY}. Since the branches are linear, the
map behaves in an i.i.d.~manner so that Lebesgue measure is a Markov measure for the 
induced map.

Moreover, one can choose the 
intervals $\{A_n\}_n$ in such a way that any of the tail decay conditions given by 
$(t_n)_n$ apply to our observable $R_Y$. By the results above these match 
the large deviations, so we may also apply the appropriate items of Theorem~\ref{thm:induce}.  
We observe that the only points $z$ which this theorem does not apply to directly are $\cup_{n\ge0} f^{-n}0$.  
It is straightforward to adapt the theorem slightly to cover all elements of this set except 0.

%%%%%%%%%%%%%%%%%%%%%%%%%%%

\subsection{Maps with exponential tails}

If we start with an interval map $f:I\to I$ and can find a well-behaved first return map 
to an interval $Y \subset I$ with exponential tails, then Theorem~\ref{thm:induce}(1) holds.  
That is, we require the first return  map $F = f^{R_Y}$ to be a full-branched Gibbs-Markov map 
where 
the induced measure $\mu_Y$ has $\mu_Y(n-1 \le R_Y < n) \le C e^{-\beta n}$
for some constants $C, \beta >0$.   By Section~\ref{gibbs}, $F$ satisfies (F1)-(F4) of 
Section~\ref{sec:exp mix}.

The fact that a full-branched Gibbs-Markov map has exponential large deviations for observables with 
exponential tails appears to be 
essentially folklore. Yuri \cite{Yur05} quotes such a result, 
but the setting is slightly different and the proof there is not given explicitly, so for completeness,
we provide the proof in Appendix~\ref{app b}.
Since $R_Y$ has exponential tails, it follows from Proposition~\ref{prop:ld} and
Corollary~\ref{cor:exp tail} that $R_Y$ 
satisfies a (local) exponential large deviations estimate and thus Theorem~\ref{thm:induce}(1) 
applies to the original system $f:I\to I$.

We remark that by this argument, Theorem~\ref{thm:induce}(1) applies to the tower map 
$(f_\Delta, \Delta)$ whenever one can construct
a Young tower \cite{young poly} over an interval as described in the proof of Theorem~\ref{thm:induce}.

In order to develop a specific class of examples, for the remainder of this section, we make
the following standing assumptions.  We assume
that $f: I \to I$ is a $C^2$, topologically mixing unimodal map with critical 
point $c$ and 
$\orb(c) = \{ f^n(c) : n \ge 1 \}$ nowhere 
dense.\footnote{We note that we can drop the topologically mixing and unimodal assumptions, but this makes our statements more involved. Similarly, one can also drop the requirement
that $\orb(c)$ be nowhere dense, see for example \cite{DemTodd}.}
Then one can find an interval $Y$, compactly contained in $I \setminus \overline{\orb(c)}$, 
such that $(Y, F)$ is full branched (see \cite[Chapter 4]{MSbook} for details), where $F$ is the
first return map to $Y$.  
Moreover we assume $F$ has bounded distortion, e.g.\ $f$ has negative Schwarzian derivative;
then $(Y, F)$ is Gibbs-Markov.  
Finally, we assume that our measure is an equilibrium state for some $\vf$ and discuss when our induced system has (F1)--(F4) 
and the return time has exponential tails so that we can conclude that Theorem~\ref{thm:induce}(1) holds.

\subsubsection{Collet-Eckmann case}
If $f$ satisfies the Collet-Eckmann condition (i.e., $|Df^n(c)|$ grows exponentially,
and this case includes Misiurewicz maps, i.e., $c$ is not recurrent
nor attracted to a stable periodic orbit, provided $f$ is non-flat at $c$), 
then for $\vf=-t\log|Df|$, there is a unique equilibrium state  for each $t$ in a neighbourhood of $[0,1]$, see for example \cite{PrzRiv11}.  Moreover, it can be deduced (e.g.\ from \cite{PrzRiv11}) that  $(Y, F)$ satisfies the conditions (F1)--(F4) for Theorem~\ref{thm:exp} to hold for the induced version of $\mu_t$  (note that the conformal measure is w.r.t.\ the normalised potential $\vf-P(\vf)$), and that the return time has exponential tails.  
By Corollary~\ref{cor:exp tail}, $R_Y$ enjoys exponential large deviations with 
respect to the equilibrium measure $\mu_t$.
Thus choosing $z \in Y_{cont}$ so that {\bf (U)} is satisfied, it follows that
Theorem~\ref{thm:induce}(1) holds for each $t$ in a neighborhood of $[0,1]$.

\subsubsection{Non-Collet Eckmann case}
If $f$ fails the Collet-Eckmann condition, then for the potential $\vf=-t\log|Df|$, there is still 
a unique equilibrium state  for $t\in (t_0, 1)$ for some $t_0<0$, again see for example \cite{PrzRiv11}.  
Moreover, $(Y, F)$ satisfies the conditions for Theorem~\ref{thm:exp} 
to hold for the induced version of $\mu_t$, and the return time has exponential tails.
So again choosing $z \in Y_{cont}$ so that {\bf (U)} is satisfied,
Theorem~\ref{thm:induce}(1) holds for this class of potentials.
By contrast, for $t=1$, even if there is an equilibrium state for $-\log|Df|$, it will have sub-exponential mixing, so Theorem~\ref{thm:induce}(1) will fail.

\subsubsection{Lipschitz potentials} 
If $\vf$ is a Lipschitz potential, then our results hold more generally: for Theorem~\ref{thm:induce}(1) to hold for the equilibrium state we only need the potential to be \emph{hyperbolic}, 
i.e., $\sup_{x\in I}\frac 1nS_n\vf(x)<P(\vf)$ for some $n$, 
where $P(\vf)$ denotes the variational pressure.  
As shown in \cite{LiRiv14} this is automatic if we merely assume that $|Df^n(f(c))|\to \infty$.

%%%%%%%%%%%%%%%%%%%%%%%%%%%%%%%%%%%%%%
\appendix

\section{Proof of Proposition~\ref{prop:LY}}
\label{appendix}

The $L^1$ bound on $\hLp_r^n \psi$ in Proposition~\ref{prop:LY} follows directly
from \eqref{eq:cov}, so we focus on proving the required bound on the variation of $\hLp_r^n \psi$.

For $r \in [0,r_0]$,
let $\cI_r^n = \{ J_i \}_i = \{ (a_i, b_i) \}$ denote the intervals of monotonicity for $\f_r^n$ and set
$K_i = f^n(J_i)$.  
Then for $\psi \in \B$  and $n \ge 0$, we estimate,
\begin{equation}
\label{eq:first}
\begin{split}
\bigvee_I \hLp_r^n \psi & \le \sum_i \bigvee_{J_i} \psi e^{S_n\vf} + \psi(a_i) e^{S_n\vf(a_i)}
+ \psi(b_i)e^{S_n\vf(b_i)} \\
& \le \sum_i 2 \bigvee_{J_i} \psi e^{S_n\vf} + \frac{1}{m_0(J_i)} \int_{J_i} \psi e^{S_n\vf} \, dm_0
\end{split}
\end{equation}
For the second term in \eqref{eq:first}, we note that by conformality and 
the bounded distortion property (F1),
we have for each $x \in J_i$,
\begin{equation}
\label{eq:dist}
e^{S_n\vf(x)} \cdot \tfrac{m_0(K_i)}{m_0(J_i)} \le 1+C_d . 
\end{equation}
For the first term in \eqref{eq:first}, we split
\begin{equation}
\label{eq:var split}
\bigvee_{J_i} \psi e^{S_n\vf} \le \sup_{J_i} e^{S_n\vf} \bigvee_{J_i} \psi
+ \sup_{J_i} |\psi|  \bigvee_{J_i} e^{S_n\vf} 
\le \sup_{J_i} e^{S_n\vf} \bigvee_{J_i} \psi
+ \sup_{J_i} |\psi|  C_d \sup_{J_i} e^{S_n\vf},
\end{equation}
where we have used Lemma~\ref{lem:pot}(b) to bound $\bigvee_{J_i} e^{S_n\vf}$.  Using the bound
$\sup_{J_i} |\psi| \le \bigvee_{J_i} \psi + (m_0(J_i))^{-1} \int_{J_i} |\psi | \, dm_0$, we 
put these estimates together in \eqref{eq:first} and use \eqref{eq:dist} to obtain,
\begin{equation}
\label{eq:merge}
\begin{split}
\bigvee_I \hLp_r^n \psi 
& \le  \sum_i (2+2C_d) \sup_{J_i} e^{S_n\vf} \bigvee_{J_i} \psi
+  \frac{(1+C_d)(1+2C_d)}{m_0(K_i)} \int_{J_i} |\psi|  \, dm_0  \\
& \le (2+2C_d) |e^{S_n\vf}|_\infty \bigvee_I \psi + \inf_i \frac{(1+C_d)(1+2C_d)}{m_0(K_i)} \int_{\I^{n-1}_r} |\psi| \, dm_0 .
\end{split}
\end{equation}
Applying \eqref{eq:merge} when $n = n_1$,  setting $\bar{\sigma} := (2+2C_d) |e^{S_{n_1}\vf}|_\infty < 1$,
and using {\bf (U)} yields,
\[
\bigvee_I \hLp_r^{n_1} \psi \le \bar{\sigma} \bigvee_I \psi + \frac{(1+2C_d)^2}{c_0} \int_{\I^{n-1}_r} |\psi| \, dm_0,
\] 
and since $\bigvee_I e^{S_n\vf} < \infty$ for each $n$, this relation can be iterated
to complete the proof of Proposition~\ref{prop:LY} with $\sigma = \bar{\sigma}^{1/n_1}$.

%%%%%%%%%%%%%%%%%%%%%%%%%%%%%%%%%%%%%%

\section{Exponential deviations}
\label{app b}

In this section, we prove the fact that full-branched Gibbs-Markov maps have
exponential large deviations for observables with exponential tails. 

Let $P_G(\phi)$ denote the Gurevich pressure of $\phi$ (see \cite{Sar99}).  Note that as in \cite[Theorem 2]{Sar99}, 
this is equal to the variational definition of pressure 
given in Section~\ref{sec:exp mix}.

\begin{proposition}\label{prop:ld}
Let $F$ be a full-branched Gibbs-Markov map and $\phi, \psi$ weakly H\"older continuous potentials.  If
 there exists $\delta > 0$ such that
$P_G(\phi + t \psi) < \infty$ for each $|t|< \delta$ (or equivalently that $|\Lp_{\phi+t\psi} 1|_\infty < \infty$), then $\psi$ enjoys exponential large deviations for $\mu$ the equilibrium state for $\phi$.
\end{proposition}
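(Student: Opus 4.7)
The plan is to establish exponential large deviations via a Chernoff--G\"artner--Ellis argument, using transfer operator perturbation theory for the family $\Lp_{\phi + t\psi}$. Write $\bar\psi = \int \psi\, d\mu$ and, for each small $t$, set
\[
Q(t) := P_G(\phi + t\psi) - P_G(\phi).
\]
The goal is to show that for every $\varepsilon > 0$ there exists $c(\varepsilon)>0$ such that
\[
\mu\bigl(|S_n\psi - n\bar\psi| > n\varepsilon \bigr) \le C e^{-c(\varepsilon) n}
\]
for all large $n$; by Chernoff, it suffices to prove that $Q$ is finite and differentiable at $0$, with $Q(0)=0$ and $Q'(0)=\bar\psi$, and that
\[
\int e^{t S_n\psi}\, d\mu = e^{n Q(t)}(1 + o(1))
\]
uniformly in $t$ in a small real neighbourhood of $0$.

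First, I would invoke the standard Gibbs--Markov theory (as in Aaronson--Denker and Sarig) to show that $\Lp_{\phi+t\psi}$ acts with a spectral gap on the space of weakly H\"older functions for every $|t|<\delta$: the hypothesis $P_G(\phi+t\psi)<\infty$ (equivalently $|\Lp_{\phi+t\psi}1|_\infty<\infty$) together with the big images / full branch structure gives the Ruelle--Perron--Frobenius dichotomy, so $\Lp_{\phi+t\psi}$ has a simple leading eigenvalue $\lambda(t)=e^{P_G(\phi+t\psi)}$ with positive weakly H\"older eigenfunction $h_t$ and conformal measure $\nu_t$, and the rest of the spectrum is contained in a disk of strictly smaller radius. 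Analytic perturbation theory (Kato) applied to the analytic family $t\mapsto \Lp_{\phi+t\psi}$ then shows that $\lambda(t)$, $h_t$, and the spectral projector are analytic on $(-\delta,\delta)$; in particular $Q$ is analytic there, with $Q'(0)=\int \psi\, d\mu_\phi = \bar\psi$ by differentiating the pressure identity.

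Next, using the spectral decomposition $\Lp_{\phi+t\psi}^n = \lambda(t)^n \Pi_t + R_t^n$ (with spectral radius of $\lambda(t)^{-1}R_t$ bounded by some $\rho<1$ uniformly on a small neighbourhood of $0$), I would derive the moment generating asymptotic
\[
\int e^{t S_n\psi}\, d\mu = \int \Lp_\phi^n\bigl(e^{t S_n\psi}\bigr) \cdot h_0^{-1}\, d\nu_\phi
= e^{nQ(t)} \bigl(c(t) + \mathcal{O}(\rho^n)\bigr),
\]
where $c(t)>0$ is analytic and $c(0)=1$; this follows from the identity $\Lp_\phi^n(e^{tS_n\psi}\cdot)=e^{-nP_G(\phi)}\Lp_{\phi+t\psi}^n(\cdot)$ and standard manipulations. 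Taking logarithms and $n\to\infty$ yields $\frac1n\log\int e^{tS_n\psi}\,d\mu \to Q(t)$.

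Finally, the Chernoff bound gives, for $t\in(0,\delta)$,
\[
\mu(S_n\psi - n\bar\psi > n\varepsilon) \le e^{-nt(\bar\psi+\varepsilon)}\int e^{tS_n\psi}\,d\mu \le C\, e^{-n\bigl[t(\bar\psi+\varepsilon)-Q(t)\bigr]},
\]
and since $Q'(0)=\bar\psi$ and $Q$ is $C^1$, the function $t\mapsto t(\bar\psi+\varepsilon)-Q(t)$ is strictly positive for small $t>0$; optimising over $t$ yields the rate $c(\varepsilon)>0$. The lower-tail bound is symmetric, using $t<0$.

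The main obstacle, and the only place where care is genuinely required, is the first step: identifying the correct Banach space of weakly H\"older observables on which the full family $\{\Lp_{\phi+t\psi}\}_{|t|<\delta}$ has a uniform spectral gap and depends analytically on $t$, given that $\psi$ is only weakly H\"older and potentially unbounded. Once this is set up, the remaining Chernoff/G\"artner--Ellis steps are routine.
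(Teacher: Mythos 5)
Your outline is correct in spirit and, with the technical hurdle you flag resolved, would give the result; but the paper takes a noticeably lighter route, and the comparison is worth spelling out. Both arguments are in essence Chernoff bounds driven by analyticity of $t \mapsto P_G(\phi + t\psi)$, but the paper does not attempt a uniform spectral decomposition for the whole family $\Lp_{\phi+t\psi}$. It only uses the spectral gap for the single unperturbed operator $\Lp_\phi$ (to invoke Cyr--Sarig and deduce strong positive recurrence of $\phi$), and then cites Sarig's theorem directly for analyticity of the pressure. The Chernoff step is then carried out combinatorially: one picks $q>0$ with $\mu_q(\psi)=\eps$ (so that strict convexity gives $P_G(\phi+q\psi) - q\eps < 0$), and bounds $\mu(S_n\psi > n\eps)$ by summing the Gibbs estimates for $\mu(C^i_n)$ over the relevant $n$-cylinders, absorbing the factor $e^{qS_n(\psi-\eps)(x^i_n)} \gtrsim 1$, and identifying the resulting sum $\sum_i e^{S_n(\phi+q\psi)(x^i_n)}$ as the partition function whose exponential growth rate is $P_G(\phi+q\psi)$. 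This requires no perturbation theory at all beyond what is already cited. Your version aims for the stronger moment-generating-function asymptotic $\int e^{tS_n\psi}\, d\mu = e^{nQ(t)}(c(t) + \mathcal{O}(\rho^n))$, which demands a uniform spectral gap and Kato analyticity for $\{\Lp_{\phi+t\psi}\}_{|t|<\delta}$ on a fixed Banach space — precisely the delicate point you flag, since $\psi$ is only weakly H\"older and may be unbounded, so $t \mapsto \Lp_{\phi+t\psi}$ is not obviously an analytic operator-valued map. This is more than the Chernoff argument needs: an upper bound $\int e^{tS_n\psi}\,d\mu \le C\,e^{n P_G(\phi+t\psi)}$ suffices, and that is exactly what the Gibbs/cylinder estimate delivers without any family-wise spectral setup. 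If you do want to pursue your route, note that Sarig's analyticity theorem already subsumes the perturbation-theory step you are trying to redo from scratch, so you could cite it for $Q(t)$ being analytic and then replace the spectral asymptotic by the one-sided partition-function bound, which brings your proof essentially into alignment with the paper's.
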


\begin{proof}
First we note that the assumptions on $\phi$ imply: (i) $\phi$ has finite Gurevich pressure
$P_G(\phi)$ \cite[Theorem 1]{Sar99}; (ii) $\phi$ is positive recurrent \cite[Corollary 2]{Sar03};
and (iii) there exists a finite conformal Borel measure $m_\phi$, positive on cylinders, such that
$\frac{dm}{dm \circ F} = e^{\phi - P_G(\phi)}$ \cite[Theorem 4, Proposition 3]{Sar99}.

Under these conditions, the associated transfer operator $\Lp_\phi$ acting on the space of
H\"older continuous functions\footnote{H\"older continuity here is defined using the
same constant $\theta$ as for the potential $\phi$, i.e., weak H\"older continuity of $\phi$ means
$\sup_{C_n^i \in \mathcal{P}_n}\sup \{|\phi(x) - \phi(y)| : x, y \in C_n^i \} \le \theta^n$, 
where $\mathcal{P}_n$ is 
the set of $n$-cylinders for $F$. We study the transfer operator on the class of functions
$f : X \to \mathbb{R}$ sharing the same property as $\phi$.} has a spectral gap.
It then follows from \cite[Theorem 2.1]{CyrSar}, that $\phi$ is strongly positive recurrent.
(We refer the reader to \cite{CyrSar} for the relevant definition.) 

Strong positive recurrence implies that if $\psi$ is a weakly H\"older continuous function 
such that $P_G(\phi + t\psi) < \infty$ for all $|t| < \delta$ and some $\delta > 0$, then
$t  \mapsto P_G(\phi + t\psi)$ is analytic in $t$ \cite[Theorem 3]{Sar01}.  
Moreover, $\phi + t\psi$ is positive recurrent for each $|t| < \delta$ and
has a Gibbs measure $\mu_t$ which is moreover the unique equilibrium state for $\phi+t\psi$.  
Denote by $\mu = \mu_0$ the Gibbs measure for $\phi$.
Without loss of generality, in what follows we assume $P_G(\phi) = \mu(\psi) = 0$ 
and that $0$ is a local minimum for $t\mapsto P_G(\phi+t\psi)$.

Now define $J_n^+(\eps)$ to be the collection of $n$-cylinders containing a point $x$ so that 
$S_n\psi(x)>n\eps$; similarly, let $J_n^-(\eps)$ denote the collection of $n$-cylinders
containing $x$ such that $S_n\psi(x) <  -n \eps$.  
We first consider $J_n^+(\eps)$.  Since $\frac{d}{dt} P_G(\phi+t\psi)\vert_{t=t_0}=\mu_{t_0}(\psi)$ for 
$|t_0| < \delta$, and by continuity of the derivative for $\eps$ small enough we can find 
$q>0$ so that $\mu_q(\psi)=\eps$. 

Then strict convexity of pressure implies that $P_G(q\psi+\phi)-q \eps<0$ (a slightly more sophisticated argument allows us to express this in terms of the Helmholtz free energy, but we do not require this here). 

Let $\P_n$ denote the set of $n$-cylinders and for $C_n^i\in \P_n$, let $x_n^i$ be 
the fixed point of $F^n$ in $C_n^i$.  So we compute, using the Gibbs property (here the constant $C$ covers the Gibbs constant and distortion constants):
\begin{align*}
\mu\left(S_n\psi>n\eps\right)&\le \sum_{C_n^i\in J_n^+(\eps)}\mu(C_n^i) \le  C\sum_{C_n^i\in J_n^+(\eps)}e^{S_n\phi(x_n^i)} \\
&\le  C^2\sum_{C_n^i\in J_n^+(\eps)}e^{q(S_n(\psi-\eps))(x_n^i))+S_n\phi(x_n^i)}\\
&\le  C^2 e^{-nq \eps}\sum_{C_n^i\in \P_n}e^{S_n(\phi+q\psi(x_n^i))}
\end{align*}
Taking logarithms, dividing by $n$ and taking limits we obtain
$$\limsup_{n\to\infty}\frac1n\log \mu\left(S_n\psi>n\eps \right)\le P(\phi+q\psi)-q \eps<0$$
as required.  A similar argument, with $q<0$, applies to $J_n^{-}(\eps)$.
\end{proof}

\begin{corollary}\label{cor:exp tail}
Under the assumptions of Proposition~\ref{prop:ld}, suppose that $\psi$ is weakly H\"older continuous 
with exponential tails, i.e.,
$\mu(n-1<|\psi|\le n)\le e^{-\beta n}$, for some $\beta > 0$. Then $\psi$ enjoys
exponential local large deviations with respect to $\mu$.
\end{corollary}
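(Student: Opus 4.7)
The plan is to reduce Corollary~\ref{cor:exp tail} directly to Proposition~\ref{prop:ld} by verifying its main hypothesis, namely that $|\Lp_{\phi+t\psi} 1|_\infty < \infty$ for all $|t|$ smaller than some $\delta > 0$. Once this is done, Proposition~\ref{prop:ld} immediately yields the exponential local large deviations.

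The first step is a pointwise estimate on $\Lp_{\phi+t\psi}1$. Fix $x\in X$ and $t\in\R$. Since $F$ is full-branched Gibbs--Markov, $F^{-1}(x)$ consists of exactly one preimage $y_i$ in each one-cylinder $C_1^i$ of the Markov partition $\mathcal{P}_1$. Normalising so that $P_G(\phi)=0$, the Gibbs property together with bounded distortion supplies a constant $C_1>0$, independent of $i$, with $e^{\phi(y_i)}\le C_1 \mu(C_1^i)$. The weak H\"older continuity of $\psi$ provides a uniform constant $K$ bounding the oscillation of $\psi$ on every one-cylinder, so $|\psi(y_i)|\le \inf_{C_1^i}|\psi|+K$. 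Combining these facts,
\[
\Lp_{\phi+t\psi}1(x) \;=\; \sum_i e^{\phi(y_i)+t\psi(y_i)} \;\le\; C_1 e^{|t|K} \sum_i \mu(C_1^i)\, e^{|t|\,\inf_{C_1^i}|\psi|}.
\]

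The second step is to sum this series using the tail hypothesis. Group the cylinders according to the integer part of $\inf_{C_1^i}|\psi|$: every cylinder in the $n$-th block satisfies $|\psi|\le n+K$ on it, and therefore the total $\mu$-mass of that block is bounded by $\mu(|\psi|>n-1)$, which telescopes via the tail hypothesis to at most $C_2 e^{-\beta n}$. Consequently
\[
\sum_i \mu(C_1^i)\, e^{|t|\,\inf_{C_1^i}|\psi|} \;\le\; C_3 \sum_{n\ge 0} e^{(|t|-\beta)n},
\]
which converges uniformly in $x$ whenever $|t|<\beta$. Hence $|\Lp_{\phi+t\psi}1|_\infty <\infty$ for all $|t|<\delta:=\beta$, and Proposition~\ref{prop:ld} delivers the claimed exponential local large deviations.

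The only non-routine step is the bookkeeping that turns the measure-theoretic tail bound on $\{|\psi|\in(n-1,n]\}$ into a pointwise bound on the transfer operator, via the Gibbs/distortion comparison $e^{\phi(y_i)}\asymp \mu(C_1^i)$ and the weak H\"older control of $\psi$ on one-cylinders. None of this is delicate; all the real analytic content--passing from finiteness of pressure to the exponential rate for $\mu(S_n\psi>n\eps)$--has already been performed in Proposition~\ref{prop:ld}.
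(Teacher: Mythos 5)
Your proof is correct and follows essentially the same route as the paper: both arguments reduce to a one-cylinder sum controlled by the Gibbs property, the H\"older oscillation bound for $\psi$ on $1$-cylinders, and a grouping by the size of $|\psi|$ that turns the tail hypothesis into a geometric series convergent for $|t|<\beta$. The only cosmetic difference is that you bound $\Lp_{\phi+t\psi}1(x)$ via preimages of an arbitrary $x$ (the explicitly stated equivalent hypothesis), whereas the paper bounds the corresponding sum over the fixed points of $F$ and then invokes the standard equivalence with finiteness of $P_G(\phi+t\psi)$.
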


\begin{proof}
Letting $\{ x_j \}_j$ be the collection of all fixed points of $F$, and 
$\psi_i$ be the maximum value $|\psi|$ takes on the 1-cylinder $X_i$,  by the Gibbs property, 
\begin{align*}
\sum_{j}e^{(\phi+t\psi)(x_j)} &=\sum_{n\ge1}\sum_{n-1< \psi_j\le n}e^{(\phi+t\psi)(x_j)}\le C^2\sum_ne^{n|t|} \mu(n-1<|\psi|\le n)\\
&\le C^2\sum_ne^{n(|t|-\beta)} <\infty
\end{align*}
provided $|t|<\beta$.  Standard theory shows that this implies that 
$P_G(\phi+t\psi)<\infty$, so that $\psi$ satisfies the hypotheses of Proposition~\ref{prop:ld}.
\end{proof}

\end{document}